\renewcommand{\epsilon}{\varepsilon}
\theoremstyle{definition}
\newtheorem{theorem}{Theorem} [section]
\newtheorem{lemma}[theorem]{Lemma}
\newtheorem{corollary}[theorem]{Corollary}
\newtheorem{definition}[theorem]{Definition}
\newcommand{\Q}{\mathbb{Q}}
\newcommand{\R}{\mathbb{R}}
\newcommand{\Z}{\mathbb{Z}}
\newcommand{\C}{\mathbb{C}}
\newcommand{\N}{\mathbb{N}}
\newcommand{\T}{\mathbb{T}}
\newcommand{\Hilbert}{\mathbb{H}}
\newcommand{\Span}{\text{span}}
\newcommand{\Ac}{{\mathcal{A}}}
\newcommand{\I}{\mathcal{I}}
\newcommand{\Oc}{{\mathcal{O}}}
\newcommand{\Plus}{\, + \,}
\newcommand{\plus}{\; + \;}
\newcommand{\Minus}{\, - \,}
\newcommand{\Eq}{\, = \,}
\newcommand{\Ne}{\, \ne \,}
\newcommand{\Ge}{\, \ge \,}
\newcommand{\Le}{\, \le \,}
\newcommand{\Lt}{\, < \,}
\newcommand{\qeddef}{{\quad $\diamondsuit$}}
\newcommand{\takeaway}{\hskip 0.8 pt \backslash \hskip 0.8 pt}
\newcommand{\tinyspace}{\hskip 0.8 pt}
\newcommand{\tinyback}{\hskip -0.8 pt}
\newcommand{\ip}[2]{\langle#1,#2\rangle}
\newcommand{\bigip}[2]{\bigl\langle #1, \, #2 \bigr\rangle}
\newcommand{\Bigip}[2]{\Bigl\langle #1, \, #2 \Bigr\rangle}
\newcommand{\biggip}[2]{\biggl\langle #1, \, #2 \biggr\rangle}
\newcommand{\norm}[1]{\|#1\|}
\newcommand{\bigparen}[1]{\bigl(#1\bigr)}
\newcommand{\Bigparen}[1]{\Bigl(#1\Bigr)}
\newcommand{\biggparen}[1]{\biggl(#1\biggr)}
\newcommand{\set}[1]{\{#1\}}
\newcommand{\ab}{\mathbf{a}}
\newcommand{\bg}{\overline{g}}
\newcommand{\tg}{\widetilde{g}}
\newcommand{\tphi}{\widetilde{\phi}}
\newcommand{\tPhi}{\widetilde{\Phi}}
\newcommand{\CHI}{\hbox{\raise .4ex \hbox{$\chi$}}}
\newcommand{\ub}{\mathbf{u}}
\newcommand{\vb}{\mathbf{v}}
\newcommand{\wb}{\mathbf{w}}
\begin{document}

\title[Overcomplete Reproducing Pairs]{Overcomplete Reproducing Pairs}

\author[L.\ Hart, C.\ Heil, I.\ Katz, and M.\ Northington V]
{Logan Hart, Christopher Heil, Ian Katz, and Michael Northington V}

\address{(L.~Hart) School of Mathematics, Georgia Institute of Technology,
Atlanta, GA 30332, USA}
\email{lhart31@gatech.edu}

\address{(C.~Heil) School of Mathematics, Georgia Institute of Technology,
Atlanta, GA 30332, USA}
\email{heil@math.gatech.edu}

\address{(I.~Katz) School of Mathematics, Georgia Institute of Technology,
Atlanta, GA 30332, USA}
\email{ian.katz95@gmail.com}

\address{(M.~Northington V) School of Mathematics,
Georgia Institute of Technology, Atlanta, GA 30332, USA}
\email{mcnorthington5@gmail.com}

\thanks{Acknowledgement:
This research was partially supported by a grant from the
Simons Foundation.}

\date{September 15, 2023}

\begin{abstract}
The Gaussian Gabor system at the critical density has the property that it is overcomplete in $L^2(\R)$ by exactly one element, and if any single element is removed then the resulting system is complete but is not a Schauder basis.  This paper characterizes systems that are overcomplete by finitely many elements.  Among other results, it is shown that if such a system has a reproducing partner, then it contains a Schauder basis.  While a Schauder basis provides a strong reproducing property for elements of a space, the existence of a reproducing partner only requires a weak type of representation of elements.  Thus for these systems weak representations imply strong representations.  The results are applied to systems of weighted exponentials and to Gabor systems at the critical density.  In particular, it is shown that the Gaussian Gabor system does not possess a reproducing partner.
\end{abstract}

\maketitle

\section{Introduction}
Frames are generalizations of orthonormal bases and Riesz bases.
They were first introduced by Duffin and Schaeffer
in their study of non-harmonic Fourier series \cite{duffin_1952}.
A sequence of vectors $\set{f_n}_{n \geq 0}$ is a
\emph{frame} for a separable, infinite dimensional Hilbert space $\Hilbert$
if there exist constants $A,$ $B > 0$ such that
the following norm equivalence holds:
\begin{align*} 
A\, \norm{f}^2
\Le \sum_{n=0}^{\infty} |\ip{f}{f_n}|^2
\Le B\, \norm{f}^2,
\qquad\text{for every } f \in \Hilbert.
\end{align*}
We refer to $A$ as a \emph{lower frame bound}
and $B$ as an \emph{upper frame bound} for $\set{f_n}_{n \ge 0}.$
A frame is similar to an unconditional basis
in that for every $f \in \Hilbert$ we have an
unconditionally convergent expansion
\begin{align} \label{expansion}
f \Eq \sum_{n=0}^{\infty} c_n f_n
\end{align}
for some scalars $(c_n)_{n \ge 0}.$
However, for a frame the coefficients need not be unique.
If the scalars $c_n$ are unique for every $f,$
then $\set{f_n}_{n \ge 0}$ is a \emph{Riesz basis} for $\Hilbert.$

There are a variety of other notions related to the existence of
representations of elements.
A (strong) \emph{Schauder basis} is a sequence $\set{f_n}_{n \ge 0}$ such that
for each $f \in \Hilbert$ there exist unique scalars $c_n$ such that
equation \eqref{expansion} holds.
A Schauder basis need not have either a lower or upper frame bound.
Further, the representations in equation \eqref{expansion} may
converge conditionally, although there must be a single fixed ordering
of the index set with respect to which the series converge
for every $f.$
A \emph{weak Schauder basis} is similar, except that we only require
that the series in equation \eqref{expansion} converge weakly for every $f.$
However, the \emph{Weak Basis Theorem} implies that every
weak Schauder basis is a Schauder basis.

Every Schauder basis is \emph{exact}, or both minimal and complete.
Complete means that the finite linear span is dense,
while minimal means that no element $f_m$ lies in the closed span
of the other elements $\set{f_n}_{n \ne m}.$
A sequence can be minimal without being exact.
We refer to texts such as \cite{Chr16} and \cite{heil_2011}
for details on frames, Riesz bases, Schauder bases, and related systems.

Given a function $g \in L^2(\R)$ and a countable
index set $\Lambda \subseteq \R^2,$
the \emph{Gabor system} generated by $g$ and $\Lambda$ is
\[
G(g,\Lambda)
\Eq \set{M_\xi T_x g}_{(x,\xi) \in \Lambda}
\Eq \set{e^{2\pi i \xi t} g(t-x)}_{(x,\xi) \in \Lambda},
\]
where $T_x$ is the {translation operator} $T_x g(t) = g(t-x)$
and $M_\xi$ is the {modulation operator}
$M_\xi g(t) = e^{2\pi i \xi t}g(t).$
The compositions $T_x M_\xi$ and $M_\xi T_x$ are
\emph{time-frequency shift operators}.

Usually the index set $\Lambda$ contains some structure.
For example, it may be a lattice $A(\Z^2)$ for some invertible matrix $A,$
or a rectangular lattice $\alpha\Z \times \beta\Z.$
In this paper, we will focus on the case when $\Lambda$
is a rectangular lattice with density $1$ (the \emph{critical density}),
which means that $\alpha\beta = 1.$
By a change of variables, this can always be reduced to the case
$\alpha = \beta = 1.$

The structure of Gabor frames makes them suitable for applications
involving time-dependent frequency content.
Hence, it is not unexpected that Gabor theory has a long history.
Gr{\"o}chenig \cite{grochenig_2001} and Janssen \cite{janssen_2001}
mention that von Neumann \cite{von_2018} claimed (without proof) that,
for the Gaussian atom $\varphi(t) = 2^{1/4}\tinyspace e^{-\pi t^2}$
and the lattice $\Lambda = \Z^2,$
the Gabor system $G(\varphi, \Z^2)$ is complete in $L^2(\R).$
Additionally, Gabor conjectured in \cite{gabor_1946}
that every function $f \in L^2(\R)$
can be represented in the form
\begin{align} \label{gabor expansion}
f \Eq \sum_{k,n \in \Z} c_{nk}(f) \, M_nT_k\varphi,
\end{align}
for some scalars $c_{nk}(f).$
Later, Janssen \cite{janssen_1981} proved that Gabor's conjecture is true,
but with convergence of the series in equation \eqref{gabor expansion}
only in the sense of tempered distributions and not in the norm of $L^2.$
Further, the coefficients $c_{nk}(f)$ may grow with $k$ and $n.$
Gabor's original system $G(\varphi, \Z^2)$ is not a frame.
It is overcomplete by exactly one element (that is, if any single
element of the system is removed then it is still complete, but
if two elements are removed then it is incomplete).
Moreover, the system with one element removed is exact,
but it is not a Schauder basis, and it is not a frame.

Many generalizations of or variations on frames have been introduced.
A \emph{Bessel sequence} need only satisfy the upper frame bound.
A \emph{semi-frame} \cite{antoine_2011, antoine_2012},
need only satisfy one of the two frame bounds.
A \emph{quasibasis} or \emph{Schauder frame} \cite{CDOSZ} is a sequence
$\set{f_n}_{n \ge 0}$ for which there exists a sequence $\set{g_n}_{n \ge 0}$
such that
\begin{equation*} 
f \Eq \sum_{n \geq 0} \, \ip{f}{g_n} \, f_n,
\qquad\text{for every } f \in \Hilbert,
\end{equation*}
where the series converges in norm
with respect to some fixed ordering of the index set.

Recently, Speckbacher and Balazs \cite{speckbacher_2015}
introduced \emph{reproducing pairs} (see also \cite{antoine_2017}).
The general definition is related to
\emph{continuous frames} with respect to arbitrary Borel measures.
However, in this paper we entirely focused on the discrete setting.
In that context, the definition takes the following form.

\begin{definition}
Let $\Psi = \set{\psi_i}_{i \in \I}$ and $\Phi = \set{\phi_i}_{i \in \I}$
be two countable families in $\Hilbert.$
Then $(\Psi, \Phi)$ is a \emph{reproducing pair} for $\Hilbert$
if the operator $S_{\Psi, \Phi} : \Hilbert \to \Hilbert$
that is weakly defined by 
\begin{align} \label{reproducing pair}
\ip{S_{\Psi, \Phi} f}{g}
\Eq \sum_{i \in \I} \ip{f}{\psi_i} \, \ip{\phi_i}{g},
\qquad\text{for } f,\, g \in \Hilbert,
\end{align}
is bounded and boundedly invertible
(that is, a {topological isomorphism} using the terminology
of \cite{heil_2011}).
In this case, we say that $\Psi$ is a reproducing partner for $\Phi,$
and conversely $\Phi$ is a reproducing partner for $\Psi.$
\qeddef\end{definition}

We allow the convergence of the infinite series in
equation \eqref{reproducing pair} to be conditional, in the sense
that there exists some fixed ordering of the index set $\I$ such that the
partial sums of the series converge with respect to that ordering.
Since we will mostly be interested in sequences that are overcomplete
by one element (or finitely many later in the paper),
we will often take the index set to be
$\I = \set{0,1,2,\dots},$ and in that case assume that the
convergence is with respect to the natural ordering.
However, we will make applications to sequences, such as Gabor systems,
that are indexed by other countable sets, and in those settings we will
assume that an ordering has been fixed on the index set.

Since $S_{\Psi, \Phi}^* = S_{\Phi, \Psi},$
if $(\Psi, \Phi)$ is a reproducing pair, then
$(\Psi, S^{-1}_{\Psi, \Phi} \Phi)$ is also a reproducing pair.
Therefore, we can assume without loss of generality that
$S_{\Psi, \Phi} = I$ (see \cite{speckbacher_2015}).
With this assumption, $(\Psi, \Phi)$ is a reproducing pair if
for each $f \in \Hilbert$ we have that the representation
$f = \sum \, \ip{f}{\psi_i} \, \phi_i$ holds weakly, i.e.,
\begin{equation} \label{reproduceidentity_eq}
\ip{f}{g}
\Eq \sum_{i \in \I} \, \ip{f}{\psi_i} \, \ip{\phi_i}{g},
\qquad\text{for all } f,\, g \in \Hilbert.
\end{equation}
In this sense, a reproducing pair is a weak analogue of a
quasibasis or Schauder frame.
Certainly every quasibasis is a reproducing pair; however, it is
unclear to us whether the converse implication holds in general.

In this paper we will consider reproducing pair properties of sequences
that are overcomplete by finitely many elements.
In Section \ref{main section} we consider a sequence $\Phi$ that,
like the original Gabor system $G(\varphi, \Z^2),$
is overcomplete by a single element.
We prove in Theorem \ref{main theorem} that if such a sequence $\Phi$ has a 
reproducing partner $\Psi,$ then it must contain a Schauder basis.
Specifically, the exact sequence obtained by removing that single element
from $\Phi$ is a Schauder basis for $\Hilbert.$

In Section \ref{weighted exponential section}, we focus on systems
of weighted exponentials $\set{e^{2\pi int} g(t)}_{n \in \Z}$
in $L^2[0,1).$
We recover a result from \cite{yoon_2012}, explicitly showing
the existence of weighted exponential systems
that are overcomplete by one element.
We prove in Theorem \ref{exact exponential}
that these systems do not contain a Schauder basis.
By applying Theorem \ref{main theorem} we construct families of sequences
that do not possess a reproducing partner.

We consider Gabor systems in Section \ref{gabor system section}.
Through the use of the Zak transform, we prove in
Theorem \ref{overcomplete gabor} and Corollary \ref{gabor system reproducing}
that there exist families of Gabor sequences at the critical density
that do not possess a reproducing partner.
In particular, we see that the Gaussian Gabor system belongs to this
family.
Balazs and Speckbacher claimed in \cite{speckbacher_2017}
that Gaussian Gabor system $G(\varphi, \Z^2)$
does have a reproducing partner.
However, we demonstrate that this is not possible.

Finally, in Section \ref{generalization} we show how our results
generalize to sequences that are overcomplete by finitely many elements.
These proofs require that we address certain issues of convergence.

\smallskip
\section{Reproducing Pairs and Schauder Bases} \label{main section}

We begin by considering a sequence $\Phi$ that is exact
(both complete and minimal).
A standard fact is that $\Phi = \set{\phi_k}_{k \ge 1}$ is minimal
if and only if there exists a \emph{biorthogonal sequence}
$\tPhi = \set{\tphi_k}_{k \ge 1}$ that satisfies
$\ip{\phi_j}{\tphi_k} = \delta_{jk}.$
Further, an exact sequence has a unique biorthogonal sequence.
(We refer to texts such as \cite{Chr16} or \cite{heil_2011} for details.)

We show first that if an exact sequence has a reproducing partner,
then it is a Schauder basis.

\begin{lemma} \label{exact factorization implies schauder basis}
If an exact sequence $\Phi = \set{\phi_k}_{k \ge 1}$
has a reproducing partner, then $\Phi$ is a Schauder basis for $\Hilbert.$
\end{lemma}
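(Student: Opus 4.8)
The plan is to show that a reproducing partner for an exact sequence is forced to be its (unique) biorthogonal sequence, and that the defining reproducing identity then exhibits $\Phi$ as a weak Schauder basis, which the Weak Basis Theorem upgrades to a genuine Schauder basis. Since $\Phi$ is exact it is minimal, and so it admits a unique biorthogonal sequence $\tPhi = \set{\tphi_k}_{k \ge 1}$ with $\ip{\phi_j}{\tphi_k} = \delta_{jk}$. Let $\Psi = \set{\psi_k}_{k \ge 1}$ be a reproducing partner. Using the normalization discussed after the definition, I would first replace $\Phi$ by $S_{\Psi,\Phi}^{-1}\Phi$ so that $S_{\Psi,\Phi} = I$; this replacement is a topological isomorphism, hence it preserves exactness and both preserves and reflects the property of being a Schauder basis, so it suffices to treat the normalized case in which the weak reproducing identity reads $\ip{f}{g} = \sum_k \ip{f}{\psi_k}\,\ip{\phi_k}{g}$ for all $f, g \in \Hilbert$.

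The key step is to evaluate this identity at $g = \tphi_j$. By biorthogonality $\ip{\phi_k}{\tphi_j} = \delta_{kj}$, so every term of the series vanishes except the $k = j$ term; in particular the partial sums stabilize and convergence is not an issue despite the merely conditional nature of the series. This yields $\ip{f}{\tphi_j} = \ip{f}{\psi_j}$ for every $f \in \Hilbert$, and hence $\psi_j = \tphi_j$ for all $j$. In other words, the reproducing partner of an exact sequence is necessarily its biorthogonal sequence.

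Substituting $\psi_k = \tphi_k$ back into the reproducing identity gives $\ip{f}{g} = \sum_k \ip{f}{\tphi_k}\,\ip{\phi_k}{g}$ for all $f, g$, which says precisely that the partial sums of $\sum_k \ip{f}{\tphi_k}\,\phi_k$ converge weakly to $f$ for every $f \in \Hilbert$, with respect to the fixed ordering of the index set. Uniqueness of these coefficients follows from minimality: any weakly convergent expansion $\sum_k c_k \phi_k = f$ must have $c_j = \ip{f}{\tphi_j}$, since pairing the (stabilizing) partial sums with $\tphi_j$ isolates $c_j$. Thus $\Phi$ is a weak Schauder basis, and the Weak Basis Theorem then implies that $\Phi$ is a Schauder basis, as desired. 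The one point that might appear to require care is the passage from weak to norm convergence, but this is exactly the content of the Weak Basis Theorem and need not be argued by hand; the substantive work is the collapsing identification $\psi_k = \tphi_k$, after which the conclusion is essentially automatic.
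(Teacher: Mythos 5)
Your proposal is correct and follows essentially the same route as the paper's own proof: identify the reproducing partner with the biorthogonal sequence by testing against $\tphi_j$, deduce the weak expansion with unique coefficients, and invoke the Weak Basis Theorem. The only difference is that you make the normalization $S_{\Psi,\Phi}=I$ explicit and justify why it is harmless, which the paper handles once and for all in the discussion following the definition.
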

\begin{proof}
Suppose that a reproducing partner $\Psi = \set{\psi_k}_{k \ge 1}$
for $\Phi$ did exist.
Then
\begin{equation} \label{reproducing_eq}
\ip{f}{g}
\Eq \sum_{k \ge 1} \, \ip{f}{\psi_k} \, \ip{\phi_k}{g},
\qquad\text{for all } f,\, g \in \Hilbert.
\end{equation}
Since $\Phi$ is exact, it has a biorthogonal sequence
$\tPhi = \set{\tphi_k}_{k \ge 1}.$
Therefore, by equation \eqref{reproducing_eq} we have for every $f$ that
$$\ip{f}{\tphi_j}
\Eq \sum_{k \ge 1} \ip{f}{\psi_k} \, \ip{\phi_k}{\tphi_j}
\Eq \ip{f}{\psi_j}.$$
Consequently $\psi_j = \tphi_j$ for every $j.$
Therefore $f = \sum \, \ip{f}{\tphi_j} \, \phi_k$ weakly for every $f.$

Now fix $f \in \Hilbert,$ and suppose that $(c_k)_{k \ge 1}$
is a scalar sequence such that
$f = \sum c_k \tinyspace \phi_k$ weakly.
Then
\begin{align*}
\ip{f}{\tphi_j}
& \Eq \sum_{k \ge 1} c_k \tinyspace \ip{\phi_k}{\tphi_j}
\Eq \sum_{k \ge 1} c_k \tinyspace \delta_{jk}
\Eq c_j.
\end{align*}
Hence there is a unique choice of coefficients for which we have
$f = \sum c_k \tinyspace \phi_k$ weakly.
Therefore $\Phi$ is a weak Schauder basis for $\Hilbert,$ and so,
by the Weak Basis Theorem (see \cite[Thm.~4.30]{heil_2011}),
$\Phi$ is a strong Schauder basis for $\Hilbert.$
\end{proof}

Now we prove that if a sequence $\Phi$ that is overcomplete by one element
possesses a reproducing partner, then it must contain a Schauder basis.

\begin{theorem} \label{main theorem}
Assume that $\Phi = \set{\phi_k}_{k \geq 0}$
satisfies the following properties.
\begin{enumerate}
\item[\textup{(a)}]
$\Phi' = \set{\phi_k}_{k \geq 1}$ is exact in $\Hilbert.$

\smallskip
\item[\textup{(b)}]
$\Phi$ has a reproducing partner $\Psi = \set{\psi_k}_{k \ge 0}.$
\end{enumerate}
Then $\Phi'$ is a Schauder basis for $\Hilbert.$
\end{theorem}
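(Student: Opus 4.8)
The plan is to reduce everything to Lemma \ref{exact factorization implies schauder basis}: since $\Phi'$ is already exact, it suffices to exhibit \emph{some} reproducing partner for $\Phi'$, and the lemma then upgrades this to the Schauder basis property. First I would normalize. Replacing $\Psi$ by $\set{(S_{\Psi,\Phi}^*)^{-1}\psi_k}_{k \ge 0}$ leaves $\Phi$ untouched and produces a reproducing partner for which $S_{\Psi,\Phi} = I$, so I may assume the reproducing identity \eqref{reproduceidentity_eq} holds on the index set $\I = \set{0,1,2,\dots}$. The naive guess is that the truncation $\Psi' = \set{\psi_k}_{k \ge 1}$ is a partner for $\Phi'$. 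Dropping the $k = 0$ term gives $\ip{S_{\Psi',\Phi'}f}{g} = \ip{f}{g} - \ip{f}{\psi_0}\ip{\phi_0}{g}$, so $S_{\Psi',\Phi'} = I - P$ where $Pf = \ip{f}{\psi_0}\phi_0$ is rank one. This is bounded, and it is boundedly invertible exactly when $\gamma := \ip{\phi_0}{\psi_0} \ne 1$. Hence when $\gamma \ne 1$, $(\Psi',\Phi')$ is a reproducing pair, and Lemma \ref{exact factorization implies schauder basis} immediately gives that $\Phi'$ is a Schauder basis.

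The main obstacle is the degenerate case $\gamma = 1$, where $\phi_0 \in \ker(I - P)$ so that $\Psi'$ is \emph{not} a partner for $\Phi'$; a genuinely different partner must be produced. (This case really occurs: if $\Phi'$ is an orthonormal basis one may take $\psi_0 = \phi_0/\norm{\phi_0}^2$.) Here I would introduce the biorthogonal system $\tPhi' = \set{\tphi_k}_{k \ge 1}$ of the exact family $\Phi'$ and test \eqref{reproduceidentity_eq} against $g = \tphi_j$. Writing $c_j := \ip{\phi_0}{\tphi_j}$, biorthogonality collapses the sum and yields the pointwise relation $\psi_j = \tphi_j - \overline{c_j}\,\psi_0$ for $j \ge 1$. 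Substituting this back into \eqref{reproduceidentity_eq} and cancelling shows that $\tPhi'$ reproduces $\Phi'$ (equivalently, $f = \sum_{k \ge 1}\ip{f}{\tphi_k}\phi_k$ weakly for every $f$) if and only if the single weak expansion
\[
\ip{\phi_0}{g} \Eq \sum_{k \ge 1} c_k \, \ip{\phi_k}{g}, \qquad g \in \Hilbert,
\]
holds. Thus everything reduces to proving $\phi_0 = \sum_{k \ge 1} c_k \phi_k$ weakly.

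To establish this expansion I would feed the two natural test vectors into \eqref{reproduceidentity_eq}. Taking $f = \phi_0$ and using $\ip{\phi_0}{\psi_k} = c_k(1 - \gamma)$ for $k \ge 1$ (a consequence of the relation above) gives $(1-\gamma)\ip{\phi_0}{g} = (1-\gamma)\sum_{k\ge1} c_k\ip{\phi_k}{g}$, which re-confirms the case $\gamma \ne 1$ but is vacuous when $\gamma = 1$. For the remaining case I would instead take $f = \phi_j$ with $j \ge 1$; setting $d_j := \ip{\phi_j}{\psi_0}$, the identity rearranges to $d_j\bigparen{\ip{\phi_0}{g} - \sum_{k\ge1}c_k\ip{\phi_k}{g}} = 0$ for all $g$. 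The key observation is that not every $d_j$ can vanish: if $\ip{\phi_j}{\psi_0} = 0$ for all $j \ge 1$, then $\psi_0$ is orthogonal to the complete family $\Phi'$, forcing $\psi_0 = 0$ and hence $\gamma = 0 \ne 1$. So some $d_j \ne 0$, and dividing by it gives precisely the desired weak expansion of $\phi_0$.

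With the expansion in hand, the substitution of the second paragraph shows $\tPhi'$ reproduces $\Phi'$, so by Lemma \ref{exact factorization implies schauder basis} the exact family $\Phi'$ is a Schauder basis in every case. The only points requiring care beyond the algebra are convergence bookkeeping — in particular verifying that $\sum_{k\ge1} c_k\ip{\phi_k}{g}$ converges, which in the case $\gamma = 1$ I would extract from the convergent series in \eqref{reproduceidentity_eq} for $f = \phi_j$ with $d_j \ne 0$, and confirming that the rank-one perturbation $I - P$ is genuinely invertible when $\gamma \ne 1$. I expect the case analysis around $\gamma = 1$, rather than these routine estimates, to be the crux of the argument.
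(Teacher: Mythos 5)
Your proposal is correct and, in substance, follows the same route as the paper: derive $\psi_j = \tphi_j - \ip{\tphi_j}{\phi_0}\,\psi_0$ from biorthogonality, test the reproducing identity against $f = \phi_j$ with $\ip{\phi_j}{\psi_0} \ne 0$ (guaranteed by completeness of $\Phi'$) to obtain the weak expansion $\phi_0 = \sum_{k \ge 1}\ip{\phi_0}{\tphi_k}\,\phi_k$, conclude that $(\tPhi,\Phi')$ is a reproducing pair, and invoke Lemma \ref{exact factorization implies schauder basis}. Your preliminary case split on $\gamma = \ip{\phi_0}{\psi_0}$, with the rank-one perturbation shortcut when $\gamma \ne 1$, is a valid refinement but does not change the core argument, since the degenerate case is handled exactly as in the paper (which instead splits on whether $\psi_0 = 0$).
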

\begin{proof}
Since $\Phi'$ is exact,
it has a biorthogonal sequence $\tPhi = \set{\tphi_k}_{k \geq 1}.$
Further, since $(\Psi,\Phi)$ is a reproducing pair,
equation \eqref{reproduceidentity_eq} holds with $\I = \set{0, 1, 2, \dots}.$
If $j \geq 1,$ then we have for all $f \in \Hilbert$ that
\begin{align*}
\ip{f}{\tphi_j}
\Eq \sum_{k=0}^{\infty} \ip{f}{\psi_k} \, \ip{\phi_k}{\tphi_j}
& \Eq \ip{f}{\psi_0} \, \ip{\phi_0}{\tphi_j} \plus
      \sum_{k=1}^{\infty} \ip{f}{\psi_k} \, \ip{\phi_k}{\tphi_j}
      \\
& \Eq \Bigip{f}{\ip{\tphi_j}{\phi_0} \, \psi_0} \plus \ip{f}{\psi_j}
      \qquad\text{\footnotesize (by biorthogonality)}
      \\[1 \jot]
& \Eq \Bigip{f}{\ip{\tphi_j}{\phi_0} \, \psi_0 \Plus \psi_j}.
\end{align*}
Therefore
\begin{equation} \label{tphi_eq}
\tphi_j \Eq \ip{\tphi_j}{\phi_0} \, \psi_0 \Plus \psi_j,
\qquad\text{for every } j \ge 1.
\end{equation}

Now assume that $\psi_0 \ne 0,$ as otherwise the result follows trivially.
Since $\Phi'$ is complete, there is some $n \geq 1$ such that
$\ip{\phi_n}{\psi_0} \ne 0.$
Using equation \eqref{tphi_eq} to substitute for $\psi_k,$
we compute that if $g \in \Hilbert$ then
\begin{align*}
\ip{\phi_n}{g}
& \Eq \sum_{k=0}^{\infty} \, \ip{\phi_n}{\psi_k} \, \ip{\phi_k}{g}
      \\
& \Eq \ip{\phi_n}{\psi_0} \, \ip{\phi_0}{g} \plus
      \sum_{k=1}^{\infty} \, \ip{\phi_n}{\psi_k} \ip{\phi_k}{g}
      \allowdisplaybreaks \\
& \Eq \ip{\phi_n}{\psi_0} \, \ip{\phi_0}{g} \plus \sum_{k=1}^{\infty} \,
      \Bigip{\phi_n}{\tphi_k - \ip{\tphi_k}{\phi_0} \, \psi_0} \,
      \ip{\phi_k}{g}
      \qquad\text{\footnotesize (by equation \eqref{tphi_eq})}
      \allowdisplaybreaks \\
& \Eq \ip{\phi_n}{\psi_0} \ip{\phi_0}{g} \plus \sum_{k=1}^{\infty} \,
      \Bigparen{\ip{\phi_n}{\tphi_k} - \ip{\phi_0}{\tphi_k} \,
      \ip{\phi_n}{\psi_0}} \, \ip{\phi_k}{g}
       \allowdisplaybreaks \\
& \Eq \ip{\phi_n}{\psi_0} \, \ip{\phi_0}{g} \plus \sum_{k=1}^{\infty} \,
      \Bigparen{\delta_{kn} - \ip{\phi_0}{\tphi_k} \, \ip{\phi_n}{\psi_0}} \,
      \ip{\phi_k}{g}
      \\
& \Eq \ip{\phi_n}{\psi_0} \, \ip{\phi_0}{g} \plus \ip{\phi_n}{g} \Minus
      \sum_{k=1}^{\infty} \ip{\phi_0}{\tphi_k} \,
      \ip{\phi_n}{\psi_0} \ip{\phi_k}{g}.
\end{align*}
Therefore,
\[
\ip{\phi_n}{\psi_0} \, \ip{\phi_0}{g}
\Eq \ip{\phi_n}{\psi_0} \sum_{k=1}^{\infty} \,
    \ip{\phi_0}{\tphi_k} \, \ip{\phi_k}{g}.
\]
Since $\ip{\phi_n}{\psi_0} \ne 0,$ we can cancel that factor and conclude that
\begin{equation} \label{phi0_eq}
\phi_0 \Eq \sum_{k=1}^{\infty} \, \ip{\phi_0}{\tphi_k} \, {\phi}_k
\quad\text{weakly}.
\end{equation}

Now we will show that $\Phi'$ is a Schauder basis.
If $g,$ $h \in \Hilbert,$ then
\begin{alignat*}{2}
\ip{g}{h}
& \Eq \ip{g}{\psi_0} \, \ip{\phi_0}{h} \plus
      \sum_{k=1}^{\infty} \, \ip{g}{\psi_k} \ip{\phi_k}{h}
      && \qquad\text{\footnotesize (reproducing property)}
      \\
& \Eq \ip{g}{\psi_0} \sum_{k=1}^{\infty} \,
      \ip{\phi_0}{\tphi_k} \,  \ip{\phi_k}{h} \plus
      \sum_{k=1}^{\infty} \ip{g}{\psi_k} \, \ip{\phi_k}{h}
      && \qquad\text{\footnotesize (by equation \eqref{phi0_eq})}
      \allowdisplaybreaks \\
& \Eq \sum_{k=1}^{\infty} \,
      \Bigparen{\tinyback \ip{g}{\psi_0} \, \ip{\phi_0}{\tphi_k} \Plus
      \ip{g}{\psi_k} \tinyback} \, \ip{\phi_k}{h}
      \allowdisplaybreaks \\
& \Eq \sum_{k=1}^{\infty} \,
      \Bigip{g}{\ip{\tphi_k}{\phi_0} \, \psi_0 \Plus \psi_k} \, \ip{\phi_k}{h}
      \\
& \Eq \sum_{k=1}^{\infty} \, \ip{g}{\tphi_k} \, \ip{\phi_k}{h}.
      && \qquad\text{\footnotesize (by equation \eqref{tphi_eq})}
\end{alignat*}
Therefore $(\tPhi,\Phi')$ is a reproducing pair, so
Lemma \ref{exact factorization implies schauder basis}
implies that $\Phi'$ is a Schauder basis for $\Hilbert.$
\end{proof}

\section{Weighted Exponentials and Reproducing Partners}
\label{weighted exponential section}

\subsection{Background} 

In this section we will apply our results to sequences of
weighted exponentials in $L^2(\T),$ where $\T = [0,1).$
These have the form $E(g,\Z) = \set{g\tinyspace e_n}_{n \in \Z},$
where $e_n(t) = e^{2\pi i n t}$
(equivalently, we could consider the trigonometric system
$\set{e_n}_{n \in \Z}$ in a weighted $L^2$ space).
A characterization of when $E(g,\Z)$ is
complete, minimal, exact, a frame, an unconditional basis,
or an orthonormal basis can be found in textbooks
such as \cite{Chr16} or \cite{heil_2011}.
A much deeper classical result due to
Hunt, Muckenhoupt, and Wheeden \cite{HMW73}
is that $E(g,\Z)$ is a Schauder basis for $L^2(\T)$ with respect to the ordering
$\Z = \set{0,-1,1,-2,2,\dots}$ if and only if
$|g|^2$ is an \emph{$\Ac_2(\T)$ weight}.

However, those results apply when the index set is
the full set of integers.
We are interested in systems that are overcomplete by one or finitely many
elements, and hence we deal with subsequences $E(g,\Lambda)$
that are indexed by a proper subset $\Lambda$ of $\Z.$
Kazarian \cite{kazarian_2014} characterized the functions $g$
and index sets $\Lambda$ such that $E(g,\Lambda)$ is
complete or minimal in $L^p(\T)$ for $1 \leq p < \infty.$
Additional related results are in
\cite{kazarian_2017, kazarian_2018, kazarian_2019}.
A characterization of functions $g$ and finite sets $F \subseteq \Z$
such that $E(g, \Z \takeaway F)$ is exact in $L^2(\T)$
was given by Heil and Yoon \cite{yoon_2012}.
Recently, Zikkos \cite{zikkos_2022} proved that
the closed span in $L^2(\gamma,\beta)$ of the system
$E(t^k,\Lambda)
= \set{t^k e^{\lambda_n t} :\, n \in \N, \, k = 1, 2, \ldots, \mu_n-1},$
with $\mu_i \in \N,$ is equal to the
closed span of its unique biorthogonal sequence
$r_{\Lambda} = \set{r_{n,k} :\, n \in \N,\, k = 1, 2, \ldots, \mu_n-1}$
if some constraints on $\Lambda$ and the $\mu_i$ are satisfied.
Further, in this case each $f \in L^2(\gamma,\beta)$
admits a Fourier-like series representation
\[
f(t)
\Eq \sum_{n=1}^{\infty} \, \biggparen{\sum_{k=1}^{\mu_n-1}
    \ip{f}{r_{n,k}} \, t^k \!} \, e^{\lambda_n t},
\]
where the series converges uniformly
on closed subintervals of $(\gamma,\beta).$

\subsection{Applications}

The following theorem combines a characterization from \cite{yoon_2012}
with a basis result from \cite{shukurov_2018}.
We include the proof for completeness.

\begin{theorem} \label{exact exponential}
Let $g \in L^2(\T)$ be such that $1/g \notin L^2(\T).$
If $(t-t_0)/g(t) \in L^2(\T)$
for some $t_0 \in \T,$ then
$E(g, \Z \takeaway \set{k})$
is exact for every $k \in \Z.$
However, there is no ordering of $\Z \takeaway \set{k}$
such that $E(g, \Z \takeaway \set{k})$
is a Schauder basis for $L^2(\T).$
\end{theorem}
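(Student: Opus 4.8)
The plan is to prove the two assertions separately, first reducing everything to the case $k=0$. Since multiplication by the unimodular function $e_{-k}$ is a unitary operator on $L^2(\T)$ carrying $E(g, \Z \takeaway \set{k})$ onto $E(g, \Z \takeaway \set{0})$ and preserving completeness, minimality, and the Schauder basis property (transporting any ordering accordingly), it suffices to treat $k=0$. Throughout I would use the basic dictionary $\ip{f}{g e_n} = \widehat{f \bar{g}}(n)$, so that $f$ is orthogonal to $E(g, \Z \takeaway \set{0})$ precisely when every nonzero Fourier coefficient of $f \bar{g}$ vanishes.

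For exactness (the content drawn from \cite{yoon_2012}), completeness follows because $f \perp E(g, \Z \takeaway \set{0})$ forces $f \bar{g}$ to equal a constant $c$ almost everywhere; then $f = c/\bar{g}$ lies in $L^2(\T)$ only if $c = 0$, since $1/g \notin L^2(\T)$, so $f = 0$. For minimality I would exhibit the biorthogonal system explicitly, setting $h_m = (e_m - e_m(t_0))/\bar{g}$ for $m \ne 0$. A direct computation gives $\ip{g e_n}{h_m} = \delta_{nm}$ for $n, m \ne 0$, the would-be $n = 0$ term being exactly what the subtraction of the constant $e_m(t_0)$ annihilates. The only step using the hypothesis is that $h_m \in L^2(\T)$: writing $e_m(t) - e_m(t_0) = (t - t_0)\,\psi_m(t)$ with $\psi_m$ bounded on $\T$ (the quotient has a removable singularity at $t_0$), one obtains $h_m = \psi_m \cdot (t - t_0)/\bar{g} \in L^2(\T)$ precisely because $(t - t_0)/g \in L^2(\T)$. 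As the system is then exact, $\set{h_m}$ is its unique biorthogonal sequence.

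For the non-basis assertion (the content drawn from \cite{shukurov_2018}), I would argue by contradiction using an ordering-free necessary condition for Schauder bases: if $\set{\phi_n}$ is a Schauder basis with basis constant $C$ and biorthogonal functionals $\set{\phi_n^*}$, then $\norm{\phi_n}\,\norm{\phi_n^*} \le 2C$ for every $n$, because $\ip{\cdot}{\phi_n^*}\phi_n = P_n - P_{n-1}$ is a difference of partial-sum projections. Applying this with $\phi_n = g e_n$ and $\phi_n^* = h_n$, and noting that $\norm{g e_n}_2 = \norm{g}_2$ is a positive constant, any Schauder basis ordering would force $\sup_{n \ne 0} \norm{h_n}_2 < \infty$. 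I would contradict this by the direct estimate, using $|e_n(t) - e_n(t_0)|^2 = 4 \sin^2(\pi n (t - t_0))$,
\[
\norm{h_n}_2^2 \Eq 4 \int_0^1 \frac{\sin^2(\pi n (t - t_0))}{|g(t)|^2} \, dt,
\]
and claiming this tends to $\infty$.

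The main obstacle is this last limit, which needs care because $1/|g|^2 \notin L^1(\T)$ (this is the hypothesis $1/g \notin L^2$), so Riemann--Lebesgue cannot be applied to the full weight, and the naive ``average value $\tfrac12$'' heuristic yields the ill-defined $\infty - \infty$. The remedy is truncation: for $R > 0$ put $A_R = \set{t : 1/|g(t)|^2 \le R}$, so that $\mathbf{1}_{A_R}/|g|^2$ is bounded, hence integrable. Given any $M$, monotone convergence supplies an $R$ with $\int_{A_R} 1/|g|^2 > M$; writing $\sin^2 = \tfrac12(1 - \cos)$ and applying Riemann--Lebesgue to the integrable function $\mathbf{1}_{A_R}/|g|^2$ makes the oscillatory term negligible for large $n$, leaving $\norm{h_n}_2^2 \ge 2\int_{A_R} 1/|g|^2 - \eps > M$ eventually. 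Thus $\norm{h_n}_2 \to \infty$, contradicting the necessary condition, so no ordering of $\Z \takeaway \set{0}$ makes $E(g, \Z \takeaway \set{0})$ a Schauder basis; by the initial reduction the same holds for every $k$.
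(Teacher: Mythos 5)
Your proof is correct, and the exactness half (reduction to $k=0$, completeness via uniqueness of Fourier coefficients of $f\bar{g} \in L^1$, and the explicit biorthogonal system $(e_m + c_m e_k)/\bar{g}$ made square-integrable by the vanishing at $t_0$) is essentially identical to the paper's. Where you genuinely diverge is the non-basis half. The paper's argument is more elementary: it expands the \emph{removed} element, $g\tinyspace e_k = \sum_{n \ne k} d_n\tinyspace g\tinyspace e_n$, computes $d_n = \overline{c_n}$ by biorthogonality, and notes that the terms of a norm-convergent series must tend to zero while $\norm{d_n\tinyspace g\tinyspace e_n}_2 = \norm{g}_2 \ne 0$ for every $n$ --- no uniform boundedness principle, no Riemann--Lebesgue. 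Your route instead invokes the standard bound $\norm{\phi_n}\,\norm{\phi_n^*} \le 2C$ coming from the partial-sum projections, and defeats it by showing $\norm{\tg_n}_2 \to \infty$ via the truncation sets $A_R = \set{t : |g(t)|^{-2} \le R}$, monotone convergence, and Riemann--Lebesgue applied to the integrable truncations. This costs more machinery, but it buys a quantitatively stronger conclusion: the unique biorthogonal system is unbounded in norm (indeed $\norm{\tg_n}_2 \to \infty$), which rules out not just a Schauder basis but any representation scheme requiring uniformly bounded coordinate functionals, and the obstruction is located in the hypothesis $1/g \notin L^2(\T)$ rather than in the unimodularity of the $c_n$. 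Both arguments are ordering-free for the same underlying reason (a uniform bound over all indices, respectively a tail condition insensitive to rearrangement), so your reduction and conclusion stand.
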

\begin{proof}
Fix $k \in \Z.$
We will first show that $E(g, \Z \takeaway \set{k})$ is complete.
If $f \in L^2(\T)$ satisfies $\ip{f}{g\tinyspace e_n} = 0$ for all $n \ne k,$
then $\ip{f \tinyspace \bg}{e_n} = 0$ for every $n \ne k.$
Since $f\overline{g}\in L^1(\T)$ and functions in $L^1(\T)$ are determined by their Fourier coefficients,
it follows that $f \tinyspace \bg = c \tinyspace e_k$ for some constant $c,$
and hence $f e_{-k} = c/\bg.$
Since $1/g \notin L^2(\T),$ we must have $c = 0.$
Therefore $E(g, \Z \takeaway \set{k})$ is complete.

Next, for each $n \ne k$ let $c_n = -e^{2\pi i(n-k)t_0},$
so that $e_n + c_n e_k$ vanishes at $t_0.$
Then the function
\[
\tg_n \Eq \frac{e_n + c_n e_k}{\bg}
\]
belongs to $L^2(\T),$ and
$\ip{g\tinyspace e_m}{\tg_n}
= \ip{e_m}{e_n + c_n e_k}
= \delta_{mn}$
for $m,$ $n \ne k.$
Therefore $\set{\tg_n}_{n \ne k}$ is biorthogonal to
$E(g, \Z \takeaway \set{k}),$ so this sequence is minimal.

Now we will show that $E(g, \Z \takeaway \set{k})$ is not a Schauder basis.
Assume that there were some ordering of $\Z \takeaway \set{k}$
such that $E(g, \Z \takeaway \set{k})$
formed a Schauder basis for $L^2(\T).$
Then there would exist unique coefficients $d_n$ such that
\begin{equation} \label{gexpansion_eq}
g\tinyspace e_k \Eq \sum_{n \ne k} d_n\tinyspace g\tinyspace e_n,
\end{equation}
where this sum converges in norm with respect to the specified
ordering of $\Z \takeaway \set{k}.$
Using the biorthogonality established earlier, it follows that
if $m \ne k$ then
\[
\ip{g\tinyspace e_k}{\tg_m}
\Eq \sum_{n \ne k} d_n\tinyspace \ip{g\tinyspace e_n}{\tg_m}
\Eq \sum_{n \ne k} d_n\tinyspace \delta_{nm}
\Eq d_m.
\]
However, if $m \ne k$ then we also have that
\[
\ip{g\tinyspace e_k}{\tg_m}
\Eq \Bigip{g\tinyspace e_k}{\frac{e_m + c_m e_k}{\bg}}
\Eq \ip{e_k}{e_m + c_m e_k}
\Eq \delta_{km} + \overline{c_m}
\Eq \overline{c_m}.
\]
Therefore $d_m = \overline{c_m}$ for $m \ne k.$
Consequently, since the series in equation \eqref{gexpansion_eq}
converges in norm, we must have
$\norm{\overline{c_n}\tinyspace g\tinyspace e_n}_2 \to 0$ as $n\to \infty.$
But $|c_n| = 1,$ so
$\norm{\overline{c_n}\tinyspace g\tinyspace e_n}_2
\Eq \norm{g}_2$
for every $n,$ which is a contradiction.
\end{proof}

Using Theorem \ref{main theorem},
we obtain the following corollary.

\begin{corollary} \label{weighted exponental reproducing}
Let $g \in L^2(\T)$ be such that $1/g \notin L^2(\T).$
If there exists some point $t_0 \in \T$ such that $(t-t_0)/g(t) \in L^2(\T),$
then there does not exist any sequence $\Psi \subseteq L^2(T)$
such that $\bigparen{\Psi, E(g,\Z)}$ is a reproducing pair.
\end{corollary}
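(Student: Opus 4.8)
The plan is to argue by contradiction, feeding the exactness result of Theorem \ref{exact exponential} into the structural dichotomy of Theorem \ref{main theorem}. Suppose, toward a contradiction, that some $\Psi \subseteq L^2(\T)$ makes $\bigparen{\Psi, E(g,\Z)}$ a reproducing pair. By the conventions fixed after the definition of a reproducing pair, this pair carries a fixed ordering of its index set $\Z$, that is, a bijection $\sigma \colon \set{0,1,2,\dots} \to \Z$ with respect to which the partial sums in equation \eqref{reproduceidentity_eq} converge. I would use $\sigma$ to relabel the system, setting $\phi_k = g \tinyspace e_{\sigma(k)}$ for $k \ge 0$, so that $\Phi = \set{\phi_k}_{k \ge 0}$ is exactly $E(g,\Z)$ presented in the form demanded by Theorem \ref{main theorem}, with distinguished first element $\phi_0 = g \tinyspace e_{k_0}$, where $k_0 = \sigma(0)$.

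Next I would verify the two hypotheses of Theorem \ref{main theorem} for this relabeled system. The tail $\Phi' = \set{\phi_k}_{k \ge 1}$ is precisely $E(g, \Z \takeaway \set{k_0})$, equipped with the ordering inherited from $\sigma$. The standing assumptions on $g$ in the corollary are identical to those of Theorem \ref{exact exponential}, so that theorem guarantees that $E(g, \Z \takeaway \set{k_0})$ is exact in $L^2(\T)$; this gives hypothesis (a). Hypothesis (b) is immediate, since $\Psi$ is by assumption a reproducing partner for $\Phi$. Theorem \ref{main theorem} then yields that $\Phi'$ is a Schauder basis for $L^2(\T)$ with respect to the inherited ordering.

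This produces the desired contradiction. The sequence $\Phi'$ is nothing other than $E(g, \Z \takeaway \set{k_0})$ under one specific ordering, and I have just concluded that this system is a Schauder basis under that ordering. But the second assertion of Theorem \ref{exact exponential} states that no ordering of $\Z \takeaway \set{k_0}$ makes $E(g, \Z \takeaway \set{k_0})$ a Schauder basis for $L^2(\T)$. Since the ordering produced above is one such ordering, no reproducing partner $\Psi$ can exist, proving the corollary.

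The step I expect to require the most care is the bookkeeping with orderings. I must make sure that the single ordering of $\Z \takeaway \set{k_0}$ inherited from the reproducing-pair ordering $\sigma$ is genuinely the ordering under which Theorem \ref{main theorem} delivers a Schauder basis, and that this ordering falls within the scope of the ``no ordering'' statement of Theorem \ref{exact exponential}. Once the relabeling is set up correctly, the remaining steps are direct applications of the two cited theorems.
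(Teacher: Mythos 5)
Your proposal is correct and follows the same route as the paper: invoke Theorem \ref{exact exponential} to get that $E(g,\Z\takeaway\set{k_0})$ is exact but never a Schauder basis, then apply Theorem \ref{main theorem} to rule out a reproducing partner. The paper's proof is just a terser version of yours; your extra care with the relabeling and the inherited ordering is a harmless (and reasonable) elaboration of the same argument.
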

\begin{proof}
By Theorem \ref{exact exponential}, the sequence $E(g,\Z)$
is overcomplete by one element,
and if we remove any element then the resulting sequence
is not a Schauder basis.
Theorem \ref{main theorem} therefore implies that there is no
sequence $\Psi \subseteq L^2(\T)$ such that
$\bigparen{\Psi, E(g,\Z)}$ is a reproducing pair.
\end{proof}

For example, the function $g(t) = t$ satisfies the hypotheses of
Corollary \ref{weighted exponental reproducing},
so the system $E(t,\Z)$ does not possess a reproducing partner.

\smallskip
\section{Gabor Systems and Reproducing Pairs} \label{gabor system section}

\subsection{Background}
Von Neumann's claim that the Gaussian Gabor system
at the critical density is complete was proven independently by
Perelomov \cite{perelomov_1971},
Bargmann, Butera, Girardello, and Klauder \cite{bargmann_1971},
and Bacry, Grossmann, and Zak \cite{bacry_1975}.
It was later conjectured by Daubechies and Grossmann that
$G(\varphi, \alpha \Z \times \beta \Z)$ is a frame if and only if
$0 < \alpha \beta < 1$ \cite{daubechies_1988, daubechies_1990}.
This conjecture was proven in full by Lyubarskii \cite{lyubarskii_1992}
and by Seip and Wallst{\'e}n \cite{seip_1992, seip_1992_2}.
At the critical density, Folland \cite{folland_1989} proved that
$G(\varphi, \alpha\Z \times 1/\alpha \Z)$ is not a frame,
and is overcomplete by exactly one element.
He further showed that if any single element is removed,
the resulting system is exact but not a Schauder basis.
We include an alternative proof of this result in
Theorem \ref{overcomplete gabor}.
On the topic of overcompleteness, it was shown in \cite{balan_2003}
that every Gabor frame $G(g, \alpha\Z \times \beta\Z),$
with $\alpha\beta < 1,$ has infinite excess
(so is overcomplete by infinitely many elements).
Gr{\"o}chenig and St{\"o}cker \cite{grochenig_2013} showed that if
$g \in L^2(\R)$ is a {totally positive function of finite type}
(which includes the Gaussian function $\varphi$), then the Gabor system
$G(g, \alpha\Z \times\beta \Z)$ is a frame for $L^2(\R)$
if an only if $\alpha \beta < 1$. 
Recently, Gr{\"o}chenig proved \cite{grochenig_2023} that if
$g \in L^1(\R)$ is totally positive and $\alpha\beta \in \Q,$
then $G(g, \alpha\Z \times \beta\Z)$
is a frame for $L^2(\R)$ if and only if $\alpha\beta < 1.$

The Zak transform is an important tool for studying Gabor systems
at the critical density $\alpha\beta = 1$
(which we reduce to $\alpha = \beta = 1$ by a change of variables).
Gr{\"o}chenig \cite{grochenig_2001} remarks that the Zak transform
was first introduced by Gel{\textquotesingle}fand \cite{gelfand_1950}.
As with many useful tools, it has been rediscovered numerous times
and goes by a variety of names.
Weil \cite{weil_1964} defined a Zak transform for
locally compact abelian groups, and this transform is often called the
\emph{Weil-Brezin} map in representation theory
and abstract harmonic analysis \cite{schempp_1984}.
Zak rediscovered this transform, which he called the $k$-$q$ transform,
in his work on quantum mechanics \cite{zak_1967}.
The terminology ``Zak transform'' has become customary in
applied mathematics and signal processing.
We refer to texts such as \cite{Chr16, grochenig_2001, heil_2011}
for details on the Zak transform.

Let $Q = [0,1)^2.$
The \emph{Zak transform} is the unitary map $Z : L^2(\R) \to L^2(Q)$
defined by
\begin{equation} \label{zak_eq}
Zf(x,\xi) \Eq \sum_{j \in \Z} f(x-j) \, e^{2\pi i j \xi},
\qquad\text{for } (x,\xi) \in Q.
\end{equation}
The series in equation \eqref{zak_eq} converges unconditionally
in the norm of $L^2(Q).$
Since $Z$ is unitary, it preserves properties such as completeness,
minimality, being a frame, being a Riesz basis, and so forth.

If $g \in L^2(\R),$ then the Gabor system generated by $g$ at the
critical density is
$G(g,\Z^2) = \set{M_n T_k g}_{k,n \in \Z}.$
For $k,$ $n \in \Z,$ let
\[
E_{nk}(x,\xi) = e^{2\pi inx} \, e^{-2\pi ik\xi},
\qquad\text{for } (x,\xi) \in \R^2.
\]
The Zak transform has the property that
\[
Z(M_nT_k g)(x,\xi)
\Eq E_{nk}(x,\xi) \, Zg(x,\xi)
\Eq e^{2\pi inx} \, e^{-2\pi ik\xi} \, Zg(x,\xi).
\]
Consequently, the image of the Gabor system under $Z$ is
\[
Z\bigparen{G(g,\Z^2)}
\Eq \set{E_{nk} \tinyspace Zg}_{(k,n) \in \Z^2}.
\]
This is a two-dimensional version of the systems of weighted exponentials
that we studied in Section \ref{weighted exponential section}.
Thus we expect that similar results will hold, although there are some
issues due to the higher-dimensional setting.

\subsection{Applications}

We will need the following lemma regarding the existence
of certain functions in $L^2(Q).$
We will use the following notation for a ``cone function'' centered at
$(x_0,\xi_0)$:
\[
\rho_{x_0,\xi_0}(x,\xi)
\Eq \sqrt{(x-x_0)^2 \Plus (\xi-\xi_0)^2}.
\]

\begin{lemma} \label{exponential bound}
Fix $(x_0,\xi_0) \in Q$ and $(a,b) \in \Z^2.$
For $(n,k) \ne (a,b),$ let $c_{nk}$ be the scalar of unit modulus such that
$E_{nk}(x_0,\xi_0) + c_{nk}\tinyspace E_{ab}(x_0,\xi_0) = 0.$
Then
\[
\frac{E_{nk} \Plus
c_{nk}\tinyspace E_{ab}}{\rho_{x_0,\xi_0}}
\]
is bounded on $Q \takeaway \set{(x_0,\xi_0)},$ and hence belongs to $L^2(Q).$
\end{lemma}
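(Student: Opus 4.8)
The plan is to show that the numerator $F \Eq E_{nk} \Plus c_{nk}\tinyspace E_{ab}$ vanishes to \emph{first} order at $(x_0,\xi_0)$, which exactly matches the linear vanishing of the cone function $\rho_{x_0,\xi_0}$ in the denominator, so that the quotient stays bounded even at the singular point. Once boundedness is established, membership in $L^2(Q)$ is immediate because $Q = [0,1)^2$ has Lebesgue measure $1$, and a bounded measurable function on a finite measure space is square integrable.

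First I would note that $F$ is a fixed linear combination of the two exponentials $E_{nk}$ and $E_{ab}$, hence extends to a smooth (indeed real-analytic) function on all of $\R^2$, and that the defining choice of $c_{nk}$ gives exactly $F(x_0,\xi_0) = 0$. Differentiating and using that $|E_{nk}| = |E_{ab}| = 1$ and $|c_{nk}| = 1$, I would bound the gradient $\nabla F = (\partial_x F, \partial_\xi F)$ in modulus by a constant $M = M(n,k,a,b)$ that is independent of the point $(x,\xi)$; explicitly $M$ is of order $2\pi\bigparen{|n| \Plus |a| \Plus |k| \Plus |b|}$, coming from the factors $2\pi i n$, $2\pi i a$, $-2\pi i k$, $-2\pi i b$ produced by differentiating the exponentials.

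Next, since $Q \subseteq [0,1]^2$ is convex and contains both $(x,\xi)$ and $(x_0,\xi_0)$, I would apply the mean value inequality along the straight segment joining these two points: writing $F(x,\xi) \Eq F(x,\xi) \Minus F(x_0,\xi_0)$ and integrating $\nabla F$ along that segment, the uniform bound $|\nabla F| \Le M$ together with the Cauchy--Schwarz inequality yields
\[
|F(x,\xi)| \Le M \, \norm{(x,\xi) \Minus (x_0,\xi_0)} \Eq M \, \rho_{x_0,\xi_0}(x,\xi).
\]
Because this estimate holds at \emph{every} point of the convex set (the segment to $(x_0,\xi_0)$ never leaves it), dividing through shows that $F/\rho_{x_0,\xi_0}$ is bounded by $M$ on all of $Q \takeaway \set{(x_0,\xi_0)}$ at once, with no separate handling of the region away from the singularity.

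I do not expect a genuine obstacle here. The only point that requires care is confirming that the numerator vanishes to first order at $(x_0,\xi_0)$ rather than merely vanishing there, since it is precisely the first-order vanishing that cancels the $1/\rho_{x_0,\xi_0}$ singularity; this is supplied automatically by the $C^1$ gradient bound and the mean value inequality. Everything else, including the passage from boundedness to $L^2(Q)$ via the finiteness of the measure of $Q$, is routine.
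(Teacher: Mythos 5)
Your proof is correct. The underlying idea is the same as the paper's: both arguments establish a Lipschitz bound $|E_{nk}(x,\xi) + c_{nk}E_{ab}(x,\xi)| \le C\,\rho_{x_0,\xi_0}(x,\xi)$ with $C$ depending only on the indices, which cancels the first-order vanishing of the denominator. The execution differs, though. The paper first exploits the unimodularity of $c_{nk}$ to rewrite the numerator's modulus as $|E_{n-a,k-b}(x-x_0,\xi-\xi_0)-1|$, and then applies the elementary inequality $|e^{i\theta}-1|\le|\theta|$; you instead bound $|\nabla F|$ uniformly by a constant of order $2\pi(|n|+|a|+|k|+|b|)$ and invoke the mean value inequality along the segment joining $(x,\xi)$ to $(x_0,\xi_0)$ (and since $F$ extends smoothly to all of $\R^2$ with the same gradient bound, even the appeal to convexity of $Q$ is dispensable). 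Your route avoids the algebraic reduction entirely and applies verbatim to any finite unimodular combination of characters vanishing at a point, whereas the paper's identity yields the marginally sharper constant $2\pi\sqrt{(n-a)^2+(k-b)^2}$. Both are complete, and the final step --- a bounded measurable function on the finite-measure set $Q$ lies in $L^2(Q)$ --- is identical.
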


\begin{proof}
If $(x,\xi) \in Q$ and $(n,k) \ne (a,b),$ then a direct calculation shows that
\begin{align*}
|E_{nk}(x,\xi) \Plus c_{nk}\tinyspace E_{ab}(x,\xi)|
\Eq |E_{n-a,k-b}(x-x_0,\xi-\xi_0) - 1|.
\end{align*}
Therefore it suffices to show that if $(n,k) \ne (0,0)$ then
\begin{equation} \label{Enk_eq}
\frac{E_{nk}(x-x_0,\xi-\xi_0) \Minus 1}{\rho_{x_0,\xi_0}(x,\xi)}
\quad\text{is bounded on } Q \takeaway \set{(x_0,\xi_0)}.
\end{equation}
First we note that
\begin{align*}
|E_{nk}(x,\xi) - 1|
\Eq |e^{2\pi i (nx - k\xi)} - 1|
& \Le 2\pi\tinyspace |nx - k\xi|
      \\[0.5 \jot]
& \Le 2\pi\tinyspace \bigparen{|nx| + |k\xi|}
      \\[0.5 \jot]
& \Le 2\pi \, \sqrt{k^2 + n^2} \, \sqrt{x^2 + \xi^2}.
\end{align*}
Therefore, for $(n,k) \ne (0,0)$ we have that
\begin{align*}
|E_{nk}(x-x_0,\xi-\xi_0) - 1|
& \Le 2\pi \, \sqrt{k^2 + n^2} \; \rho_{x_0,\xi_0}(x,\xi),
\end{align*}
and equation \eqref{Enk_eq} follows from this.
\end{proof}

Now, we prove an analogue of Theorem \ref{exact exponential}
for Gabor systems at the critical density.

\begin{theorem} \label{overcomplete gabor}
Let $g \in L^2(\R)$ be such that $1/Zg \notin L^2(Q).$
If there is some point $(x_0,\xi_0) \in Q$ such that
$\rho_{x_0,\xi_0}/Zg \in L^2(Q),$ then
the Gabor system $G\bigparen{g,\Z^2 \takeaway \set{(a,b)}}$
is exact in $L^2(\R)$ for every pair $(a,b) \in \Z^2.$
However, there is no ordering of $\Z^2 \takeaway \set{(a,b)}$
for which this system is a Schauder basis for $L^2(\R).$
\end{theorem}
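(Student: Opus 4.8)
The plan is to transfer the problem to $L^2(Q)$ through the Zak transform and then mirror the proof of Theorem \ref{exact exponential}, with the genuinely two-dimensional technicalities absorbed into Lemma \ref{exponential bound}. Since $Z$ is unitary and $Z(M_nT_kg) = E_{nk}\tinyspace Zg$, removing one element from $G(g,\Z^2)$ corresponds to removing one element $E_{ab}\tinyspace Zg$ from the weighted exponential system $\set{E_{nk}\tinyspace Zg}_{(n,k) \in \Z^2}$ in $L^2(Q)$, and exactness (respectively, being a Schauder basis for some ordering) is preserved under $Z$. So I would prove instead that $\set{E_{nk}\tinyspace Zg}_{(n,k) \ne (a,b)}$ is exact in $L^2(Q)$ but is a Schauder basis for no ordering, which matches the notation of Lemma \ref{exponential bound}. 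I record two setup facts: $\set{E_{nk}}_{(n,k) \in \Z^2}$ is an orthonormal basis for $L^2(Q)$ (it is the two-dimensional Fourier basis up to relabeling of indices), and the hypothesis $\rho_{x_0,\xi_0}/Zg \in L^2(Q)$ forces $Zg \ne 0$ almost everywhere on $Q.$

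For completeness I would argue exactly as in Theorem \ref{exact exponential}. If $F \in L^2(Q)$ is orthogonal to $E_{nk}\tinyspace Zg$ for every $(n,k) \ne (a,b),$ then $F\overline{Zg} \in L^1(Q)$ has vanishing Fourier coefficient against every $E_{nk}$ with $(n,k) \ne (a,b).$ Since functions in $L^1(Q)$ are determined by their Fourier coefficients, $F\overline{Zg} = c\tinyspace E_{ab}$ for some constant $c,$ so $|F| = |c|/|Zg|$ almost everywhere. Because $1/Zg \notin L^2(Q),$ membership $F \in L^2(Q)$ forces $c = 0,$ and hence $F = 0.$ For minimality I would exhibit the biorthogonal sequence explicitly: for each $(n,k) \ne (a,b)$ take the unit-modulus scalar $c_{nk}$ from Lemma \ref{exponential bound}, so that $E_{nk} + c_{nk}\tinyspace E_{ab}$ vanishes at $(x_0,\xi_0),$ and set
\[
\tg_{nk} \Eq \frac{E_{nk} \Plus c_{nk}\tinyspace E_{ab}}{\overline{Zg}}.
\]
A short computation using the orthonormality of $\set{E_{nk}}$ then gives $\ip{E_{ml}\tinyspace Zg}{\tg_{nk}} = \delta_{(m,l),(n,k)}$ for all $(m,l),(n,k) \ne (a,b),$ so the system is minimal, and together with completeness this yields exactness.

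The crucial point, and the one place where the two-dimensional setting genuinely differs from Theorem \ref{exact exponential}, is verifying that each $\tg_{nk}$ actually lies in $L^2(Q).$ In one dimension a single first-order zero of $e_n + c_n e_k$ at $t_0$ matched the factor $(t-t_0)$ appearing in the hypothesis; here $E_{nk} + c_{nk}\tinyspace E_{ab}$ vanishes only linearly at $(x_0,\xi_0),$ and the correct replacement for $(t-t_0)$ is the cone function $\rho_{x_0,\xi_0}.$ Lemma \ref{exponential bound} supplies precisely the needed estimate, namely that $(E_{nk} + c_{nk}\tinyspace E_{ab})/\rho_{x_0,\xi_0}$ is bounded on $Q \takeaway \set{(x_0,\xi_0)},$ so that
\[
\tg_{nk} \Eq \frac{E_{nk} \Plus c_{nk}\tinyspace E_{ab}}{\rho_{x_0,\xi_0}} \cdot \frac{\rho_{x_0,\xi_0}}{\overline{Zg}}
\]
is a bounded function times the $L^2$ function $\rho_{x_0,\xi_0}/\overline{Zg},$ and hence lies in $L^2(Q).$ I expect this $L^2$-membership to be the only new ingredient beyond the one-dimensional argument.

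Finally, to rule out the Schauder basis property I would repeat the one-dimensional contradiction. If some ordering of $\Z^2 \takeaway \set{(a,b)}$ made the system a Schauder basis, then $E_{ab}\tinyspace Zg$ would admit a norm-convergent expansion $E_{ab}\tinyspace Zg = \sum d_{nk}\tinyspace E_{nk}\tinyspace Zg.$ Pairing against $\tg_{ml}$ and using biorthogonality identifies the coefficients as $d_{ml} = \overline{c_{ml}},$ which have modulus $1.$ Norm convergence forces the terms to satisfy $\norm{d_{nk}\tinyspace E_{nk}\tinyspace Zg}_2 \to 0$; but $\norm{d_{nk}\tinyspace E_{nk}\tinyspace Zg}_2 = |c_{nk}|\tinyspace \norm{Zg}_2 = \norm{g}_2 \ne 0$ for every $(n,k),$ a contradiction. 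Hence no ordering makes the system a Schauder basis, which completes the argument after undoing the Zak transform.
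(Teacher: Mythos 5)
Your proposal is correct and follows the same route as the paper: transfer to $L^2(Q)$ via the Zak transform, run the completeness argument from Theorem \ref{exact exponential}, build the biorthogonal system $(E_{nk}+c_{nk}E_{ab})/\overline{Zg}$ whose $L^2$ membership is supplied by Lemma \ref{exponential bound}, and derive the same unit-modulus-coefficient contradiction to rule out a Schauder basis. The paper only sketches these steps, so your write-up simply fills in the details of the intended argument.
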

\begin{proof}
The proof is similar to the proof of Theorem \ref{exact exponential},
so we only sketch the details.
Since $\ip{f}{M_nT_kg}_{L^2(\R)} = \ip{Zf}{E_{nk}Zg}_{L^2(Q)},$
completeness follows immediately from the assumption
that $1/Zg \notin L^2(Q).$

Observe that
\[
\frac{E_{nk} \Plus c_{nk}\tinyspace E_{ab}}{\overline{Zg}}
\Eq \frac{E_{nk} \Plus c_{nk}\tinyspace E_{ab}}{\rho_{x_0,\xi_0}} \cdot
    \frac{\rho_{x_0,\xi_0}}{\overline{Zg}}.
\]
This is the product of a bounded function with a square-integrable
function, so belongs to $L^2(Q).$
Therefore, since $Z$ is unitary, there is a function
$\tg_{nk} \in L^2(\R)$ such that
\[
Z(\tg_{nk}) \Eq \frac{E_{nk} \Plus c_{nk}\tinyspace E_{ab}}{\overline{Zg}}.
\]
Since
$\ip{M_nT_kg}{\tg_{nk}}_{L^2(\R)}
= \bigip{E_{nk}\tinyspace Zg}{Z(\tg_{nk})}_{L^2(Q)},$
it follows that $\set{\tg_{nk}}_{(n,k) \ne (a,b)}$ is biorthogonal to
$G\bigparen{g, \Z^2 \takeaway \set{(a,b)}}.$
Therefore that system is minimal.

Finally, by again using the fact that $Z$ is unitary, the proof that
$G\bigparen{g, \Z^2 \takeaway \set{(a,b)}}$ is not a Schauder basis
is very similar to the argument presented in the proof of
Theorem \ref{exact exponential}.
\end{proof}

Next we give the Gabor system equivalent of
Corollary \ref{weighted exponental reproducing}.

\begin{corollary} \label{gabor system reproducing}
Let $g \in L^2(\R)$ be such that $1/Zg \notin L^2(Q).$
If there is some point $(x_0,\xi_0) \in Q$ such that
$\rho_{x_0,\xi_0}/Zg \in L^2(Q),$
then $G(g,\Z^2)$ does not possess a reproducing partner.
\end{corollary}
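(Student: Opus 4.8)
The plan is to derive a contradiction by invoking Theorem \ref{main theorem}, in direct parallel with the proof of Corollary \ref{weighted exponental reproducing}; the only genuinely new work is the bookkeeping needed to recast a $\Z^2$-indexed system in the single-index framework of that theorem. First I would suppose, toward a contradiction, that $G(g,\Z^2)$ does possess a reproducing partner $\Psi = \set{\psi_i}_{i \in \Z^2}$, so that the reproducing identity \eqref{reproduceidentity_eq} holds with respect to some fixed ordering of the index set $\Z^2$. Fix any $(a,b) \in \Z^2$. Since $g$ satisfies $1/Zg \notin L^2(Q)$ and $\rho_{x_0,\xi_0}/Zg \in L^2(Q)$, Theorem \ref{overcomplete gabor} guarantees that $G(g, \Z^2 \takeaway \set{(a,b)})$ is exact in $L^2(\R)$.

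Next I would relabel the system so that it matches the hypotheses of Theorem \ref{main theorem}, which are stated for a sequence indexed by $\set{0,1,2,\dots}$ with its distinguished element in position $0$. I would choose a bijection of $\N$ onto $\Z^2$ that is compatible with the given fixed ordering and that sends $0$ to $(a,b)$; concretely, take the given ordering and move the single index $(a,b)$ to the front. This is a permutation of the ordering that displaces only finitely many indices. In the relabeled indexing, $\Phi = \set{\phi_k}_{k \ge 0} = G(g,\Z^2)$ with $\phi_0 = M_b T_a g$, and the correspondingly relabeled $\Psi = \set{\psi_k}_{k \ge 0}$ still forms a reproducing pair with $\Phi$, while $\Phi' = \set{\phi_k}_{k \ge 1} = G(g, \Z^2 \takeaway \set{(a,b)})$ is exact. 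Thus both hypotheses of Theorem \ref{main theorem} are satisfied, and that theorem yields that $\Phi'$ is a Schauder basis for $L^2(\R)$.

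This is the contradiction: Theorem \ref{overcomplete gabor} asserts that there is \emph{no} ordering of $\Z^2 \takeaway \set{(a,b)}$ for which $G(g, \Z^2 \takeaway \set{(a,b)})$ is a Schauder basis, whereas the step above exhibits one. Therefore no reproducing partner $\Psi$ can exist, which is the assertion of the corollary.

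The one delicate point, and the step I expect to require the most care, is the relabeling: one must verify that the reproducing-pair property is insensitive to a finite rearrangement of the index set, so that the single-index hypotheses of Theorem \ref{main theorem} genuinely transfer to the $\Z^2$-indexed Gabor system. I expect this to be routine, since moving finitely many terms of a (possibly conditionally) convergent series alters neither the convergence of the partial sums nor their limit; but it is the precise place where the passage from the abstract theorem to the Gabor setting must be justified, and it is worth stating explicitly rather than leaving implicit.
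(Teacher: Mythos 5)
Your proposal is correct and follows essentially the same route as the paper: apply Theorem \ref{overcomplete gabor} to get exactness of the deleted system together with the failure of the Schauder basis property, then invoke Theorem \ref{main theorem} to rule out a reproducing partner. The extra care you take with relabeling the $\Z^2$ index set so that $(a,b)$ sits in position $0$ is a point the paper leaves implicit, and your observation that a finite rearrangement does not disturb the (possibly conditional) convergence is the right justification.
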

\begin{proof}
By Theorem \ref{overcomplete gabor},
the Gabor system $G(g,\Z^2)$ is overcomplete by one element,
and if we remove any one element then the resulting sequence
is not a Schauder basis.
Theorem \ref{main theorem} therefore implies that
$G(g,\Z^2)$ does not have a reproducing partner.
\end{proof}

\subsection{The Original Gabor System}

We will show that the Gaussian Gabor system at the critical density
does not possess a reproducing partner.
We set $\varphi(t) = 2^{1/4}\tinyspace e^{-\pi t^2},$
and let $\Theta = Z \varphi$ be the Zak transform of the Gaussian function.
Because $\varphi$ is smooth and decays quickly, $\Theta$ is smooth
on $Q,$ and furthermore it has a single zero in $Q,$ at the point $(1/2,1/2).$
In fact, we have explicitly (compare \cite{janssen_2006}) that
\begin{align}
\Theta(x,\xi)
& \Eq 2^{1/4} \sum_{k \in \Z} e^{-\pi(x-k)^2} \, e^{2\pi i k\xi}
      \notag \\
& \Eq -2^{1/4} i \,  e^{-\pi (x-1/2)^2 + \pi i (\xi-1/2)^2} \,
      \theta_1\bigparen{\pi\bigparen{\xi - \tfrac{1}{2} -
      i \bigparen{x - \tfrac{1}{2}}}, \, e^{-\pi}},
      \notag 
\end{align}
where $\theta_1$ is the first Jacobi theta function,
\begin{equation*} 
\theta_1(z,q)
\Eq -i\, \sum_{k \in \Z} (-1)^k \, q^{(k+1/2)^2} \, e^{(2k+1)iz},
\end{equation*}
see \cite[Chap.~1]{whittaker_1927}.
We will implicitly assume henceforth that $q = e^{-\pi},$
and just write $\theta_1(z)$ instead of $\theta_1(z,q).$

\begin{corollary}
$1/\Theta \notin L^2(Q),$ but
$\dfrac{\rho_{1/2,1/2}}{\Theta} \,\in\, L^2(Q).$
Consequently, $G(\varphi,\Z^2)$ does not possess a reproducing partner.
\end{corollary}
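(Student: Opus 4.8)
The plan is to reduce both integrability claims to a precise description of how $\Theta$ vanishes at its unique zero $(1/2,1/2)$: I will show that $\Theta$ vanishes there exactly to first order in the cone variable, i.e.\ that $|\Theta(x,\xi)| \asymp \rho_{1/2,1/2}(x,\xi)$ on a neighborhood of $(1/2,1/2)$. Given such a two-sided estimate, the statement $1/\Theta \notin L^2(Q)$ and the statement $\rho_{1/2,1/2}/\Theta \in L^2(Q)$ both follow from elementary polar-coordinate estimates, and the final conclusion is an immediate application of Corollary \ref{gabor system reproducing} with $(x_0,\xi_0) = (1/2,1/2)$.

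The first step is to exploit the explicit product formula for $\Theta$ already recorded above. Writing $w = \xi - \tfrac12 - i\bigparen{x - \tfrac12}$, one has the isometry $|w| = \rho_{1/2,1/2}(x,\xi)$, and the formula presents $\Theta(x,\xi) = P(x,\xi)\,\theta_1(\pi w)$ with prefactor $P(x,\xi) = -2^{1/4} i\, e^{-\pi(x-1/2)^2 + \pi i (\xi - 1/2)^2}$. Since $|P(x,\xi)| = 2^{1/4} e^{-\pi(x-1/2)^2}$, the prefactor is continuous and bounded above and below by positive constants on $\overline{Q}$. The key analytic input is that $\theta_1(\cdot) = \theta_1(\cdot, e^{-\pi})$ is entire with a \emph{simple} zero at the origin: $\theta_1(0) = 0$, while $\theta_1'(0) = 2\sum_{n \ge 0} (-1)^n (2n+1)\, e^{-\pi(n+1/2)^2} \ne 0$. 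Hence $\theta_1(\pi w) = \pi\tinyspace \theta_1'(0)\, w + O(w^2)$ as $w \to 0$, so $|\theta_1(\pi w)| \asymp |w| = \rho_{1/2,1/2}$ for $w$ small.

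Multiplying by the bounded prefactor gives constants $0 < c_1 \le c_2$ and $\delta > 0$ with $c_1 \rho_{1/2,1/2} \Le |\Theta| \Le c_2 \rho_{1/2,1/2}$ whenever $\rho_{1/2,1/2}(x,\xi) < \delta$. Away from the zero I would use the quasi-periodicity of the Zak transform, $|\Theta(x+1,\xi)| = |\Theta(x,\xi)| = |\Theta(x,\xi+1)|$, to view $|\Theta|$ as a continuous function on the compact torus $\R^2/\Z^2$ having a single zero; therefore $|\Theta|$ is bounded below by a positive constant outside the $\delta$-neighborhood of $(1/2,1/2)$. The two integrability statements are then routine. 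On the neighborhood $\rho_{1/2,1/2}/|\Theta| \le 1/c_1$, and elsewhere $\rho_{1/2,1/2}/|\Theta|$ is bounded since $\rho_{1/2,1/2}$ is bounded on $Q$ and $|\Theta|$ is bounded below; thus $\rho_{1/2,1/2}/\Theta \in L^\infty(Q) \subseteq L^2(Q)$. Conversely $|1/\Theta|^2 \ge c_2^{-2} \rho_{1/2,1/2}^{-2}$ near the zero, and passing to polar coordinates centered at $(1/2,1/2)$ yields $\int_0^\delta r^{-2}\, r \, dr = \int_0^\delta r^{-1}\, dr = \infty$, so $1/\Theta \notin L^2(Q)$. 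These are exactly the hypotheses of Corollary \ref{gabor system reproducing}, which gives that $G(\varphi,\Z^2)$ possesses no reproducing partner.

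The step I expect to be the main obstacle is establishing the sharp first-order vanishing, namely the two-sided cone estimate near $(1/2,1/2)$. Everything hinges on identifying the zero of $\theta_1$ at the origin as simple, so that $\theta_1'(0) \ne 0$, and on the isometry $|w| = \rho_{1/2,1/2}$ that transfers the complex simple zero into a genuine linear lower bound in the real cone variable $\rho_{1/2,1/2}$. Once the estimate is secured, the polar-coordinate integral and the compactness-plus-periodicity argument away from the zero are straightforward.
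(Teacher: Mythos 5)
Your proposal is correct and follows essentially the same route as the paper: both arguments hinge on the first-order vanishing of $\Theta$ at $(1/2,1/2)$ coming from $\theta_1'(0) \ne 0$, a cone estimate $|\Theta| \asymp \rho_{1/2,1/2}$ near the zero, a uniform lower bound away from it, and then elementary integration. Your version is if anything slightly more complete, since you make explicit the upper bound $|\Theta| \le c_2\,\rho_{1/2,1/2}$ needed for $1/\Theta \notin L^2(Q)$ (which the paper dispatches as ``a similar argument'') and you justify the lower bound off the zero via periodicity on the compact torus.
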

\begin{proof}
The Taylor series expansion of $\Theta$ about the point $(1/2,1/2)$ is
\begin{align*}
\Theta(x,\xi)
& \Eq -2^{1/4}\pi\, \theta_1^{\,\prime}(0) \,
  \Bigparen{\tinyback\bigparen{x -  \tfrac{1}{2}} \Plus
  i \bigparen{\xi -  \tfrac{1}{2}} \tinyback} \plus
  \Oc\Bigparen{\tinyback\bigparen{x- \tfrac{1}{2}}^2 \Plus
  \bigparen{\xi -  \tfrac{1}{2}}^2},
\end{align*}
where 
\[
\theta_1^{\,\prime}(0)
\Eq \sum_{k \in \Z} \, (-1)^k \, (2k+1) \, e^{-\pi(k + 1/2)^2}
\Eq 2 \, \sum_{k=0}^{\infty} \, (-1)^k \, (2k+1) \, e^{-\pi(k + 1/2)^2}
\Ne 0.
\]
Therefore, there exist constants $C > 0$ and $0 < \delta < 1/2$ such that
\[
|\Theta(x,\xi)|
\Ge C \,
    \sqrt{\bigparen{x- \tfrac{1}{2}}^2 \Plus \bigparen{\xi -  \tfrac{1}{2}}^2}
\Eq C \, \rho_{1/2,1/2}(x,\xi),
\qquad\text{if } (x,\xi) \in B_\delta,
\]
where $B_\delta$ is the open ball of radius $\delta$ centered at $(1/2,1/2).$
Additionally, since $\Theta$ is continuous and its only zero in $Q$
is at the point $(1/2,1/2),$ there is some $c > 0$ such that
\[
|\Theta(x,\xi)| \Ge c,
\qquad\text{if } (x,\xi) \in Q \takeaway B_\delta.
\]
Hence, we compute that
\begin{align*}
\iint_Q \, \frac{\rho_{1/2,1/2}(x,\xi)^2}{|\Theta(x,\xi)|^2} \,dx \, d\xi
& \Eq \iint_{Q \takeaway B_\delta}
      \frac{\rho_{1/2,1/2}(x,\xi)^2}{|\Theta(x,\xi)|^2} \, dx \, d\xi \plus
      \iint_{B_\delta} \frac{\rho_{1/2,1/2}(x,\xi)^2}{|\Theta(x,\xi)|^2} \,
      dx\,d\xi
      \\
& \Le \iint_{Q \takeaway B_\delta} \frac{\rho_{1/2,1/2}(x,\xi)^2}{c^2} \,
      dx \, d\xi \plus \iint_{B_\delta} \frac{1}{C^2} \, dx \, d\xi
      \\
& \Le \frac{1}{2c^2} \Plus \frac{\pi\delta^2}{C^2}
\Lt \infty.
\end{align*}
This shows that $\rho_{1/2,1/2}/\Theta$ is square-integrable on $Q.$
A similar argument can be used to prove that $1/\Theta \notin L^2(Q).$
Theorem \ref{overcomplete gabor} therefore implies that
$G(\varphi,\Z^2)$ is overcomplete by exactly one element,
and if any one element is removed then the resulting system is
not a Schauder basis for $L^2(\R).$
Thus, it follows from Corollary \ref{gabor system reproducing}
that $G(\varphi,\Z^2)$ does not possess a reproducing partner.
\end{proof}

\section{Overcomplete by Finitely Many Elements} \label{generalization}

In this section we will generalize Theorem \ref{main theorem}
to sequences that are overcomplete by $n > 1$ elements.

\subsection{Lemmas}

The following lemma will allow us to reduce to the case where the
sets of overcomplete elements $\set{\phi_0, \ldots, \phi_{n-1}}$
and $\set{\psi_0,\ldots, \psi_{n-1}}$ are each linearly independent.

\begin{lemma} \label{reduction lemma} 
Let $\phi = \set{\phi_0, \ldots, \phi_{n-1}}$
and $\psi = \set{\psi_0,\ldots, \psi_{n-1}}$ be subset of $\Hilbert.$
If either $\phi$ or $\psi$ is linearly dependent, then there exist elements
$\psi_k',$ $\phi_k' \in \Hilbert$ such that
\begin{equation} \label{reduction_eq}
\sum_{k=0}^{n-1} \, \ip{f}{\psi_k} \, \ip{\phi_k}{g}
\Eq \sum_{k=0}^{n-2} \, \ip{f}{\psi_k'} \, \ip{\phi_k'}{g},
\qquad\text{for all } f,\, g \in \Hilbert.
\end{equation}
Further, we can choose these functions so that
$\phi_j' \in \underset{k = 0,\ldots, n-2}{\Span} \set{\phi_k}$
and $\psi_j' \in \underset{k = 0,\ldots, n-2}{\Span} \set{\psi_k}.$
\end{lemma}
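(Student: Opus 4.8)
The plan is to use the linear dependence to absorb one redundant vector into the others, thereby lowering the number of summands from $n$ to $n-1$ while preserving the bilinear expression in \eqref{reduction_eq}. First I would note that this expression is symmetric between the two families: since
\[
\overline{\sum_{k=0}^{n-1} \ip{f}{\psi_k} \, \ip{\phi_k}{g}}
\Eq \sum_{k=0}^{n-1} \ip{g}{\phi_k} \, \ip{\psi_k}{f},
\]
an identity of the required form in the case where $\phi$ is dependent follows from the case where $\psi$ is dependent by conjugating and interchanging $f$ and $g$ (which also interchanges the roles of $\phi$ and $\psi$). Hence it suffices to treat the case in which $\psi = \set{\psi_0, \dots, \psi_{n-1}}$ is linearly dependent. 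In that case I would extract a nontrivial relation $\sum_{k=0}^{n-1} c_k \psi_k = 0$, pick an index $j$ with $c_j \neq 0$, and relabel the pairs $(\phi_k, \psi_k)$ so that $j = n-1$; solving for $\psi_{n-1}$ then gives $\psi_{n-1} = \sum_{k=0}^{n-2} a_k \psi_k$ with $a_k = -c_k/c_{n-1}$.

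The main step is then a direct substitution. Using conjugate-linearity of the inner product in its second argument, $\ip{f}{\psi_{n-1}} = \sum_{k=0}^{n-2} \overline{a_k} \, \ip{f}{\psi_k}$, so the last summand splits as
\[
\ip{f}{\psi_{n-1}} \, \ip{\phi_{n-1}}{g}
\Eq \sum_{k=0}^{n-2} \overline{a_k} \, \ip{f}{\psi_k} \, \ip{\phi_{n-1}}{g}
\Eq \sum_{k=0}^{n-2} \ip{f}{\psi_k} \, \ip{\overline{a_k}\, \phi_{n-1}}{g},
\]
where in the last equality I used linearity in the first argument to pull $\overline{a_k}$ inside as an unconjugated scalar. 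Adding this to the first $n-1$ terms and collecting the common factor $\ip{f}{\psi_k}$ yields
\[
\sum_{k=0}^{n-1} \ip{f}{\psi_k} \, \ip{\phi_k}{g}
\Eq \sum_{k=0}^{n-2} \ip{f}{\psi_k} \, \ip{\phi_k \Plus \overline{a_k}\, \phi_{n-1}}{g}.
\]
Thus I would take $\psi_k' = \psi_k$ and $\phi_k' = \phi_k + \overline{a_k}\, \phi_{n-1}$ for $k = 0, \dots, n-2$, which establishes \eqref{reduction_eq}.

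Finally I would check the span statement directly from these formulas: $\psi_k' = \psi_k$ lies in $\Span \set{\psi_0, \dots, \psi_{n-2}}$ (the span of the dependent family with its redundant vector removed), while $\phi_k' = \phi_k + \overline{a_k}\, \phi_{n-1}$ lies in $\Span \set{\phi_0, \dots, \phi_{n-1}}$; the mirror statement, with the reduced span on the $\phi$ side, holds in the case that $\phi$ is the dependent family, via the symmetry reduction above. I do not anticipate a serious obstacle, since once the relation is extracted the argument is purely finite-dimensional linear algebra. The only points requiring genuine care are keeping the conjugates consistent with the sesquilinear convention of \eqref{reduction_eq} (so that it is $\overline{a_k}$, and not $a_k$, that appears in $\phi_k'$), and stating the symmetry reduction precisely enough that the single computation above indeed covers both hypotheses of the lemma.
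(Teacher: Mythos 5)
Your proof is correct and is essentially the paper's own argument: after reducing to the case that $\psi$ is dependent, both you and the paper set $\psi_k' = \psi_k$ and $\phi_k' = \phi_k + \overline{c_k}\,\phi_{n-1}$, and your bookkeeping of the conjugates matches the sesquilinear convention of \eqref{reduction_eq}. The extra care you take --- the conjugate-and-swap symmetry reduction, the relabeling so the redundant vector is $\psi_{n-1}$, and the honest observation that $\phi_k'$ lands in $\Span\set{\phi_0,\dots,\phi_{n-1}}$ rather than the smaller span literally claimed in the statement --- only fills in steps the paper dispatches with ``without loss of generality.''
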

\begin{proof}
Without loss of generality, assume that $\psi$ is linearly dependent,
so there exist coefficients $c_j$ such that
$\psi_{n-1} \Eq \sum_{j=0}^{n-2} c_j\tinyspace \psi_j.$
If we set $\psi_k' = \psi_k$ and
$\phi_k' = \phi_k + \overline{c_k}\tinyspace \phi_{n-1}$
for $k = 0, \dots, n-2,$ then equation \eqref{reduction_eq} holds.
\end{proof}

We also need a lemma that if $\set{\phi_m}_{m \in \I}$
is complete and $\set{\psi_0, \ldots, \psi_{n-1}}$ is linearly independent
in $\Hilbert,$ then we can create a particular sequence of vectors
that is complete in $\C^n.$

\begin{lemma} \label{linear independence completeness}
Assume vectors $\psi_0, \ldots, \psi_{n-1}$
are linearly independent in $\Hilbert,$
and $\set{\phi_m}_{m \geq n}$ is complete in $\Hilbert.$
Set
\[
\vb_m
\Eq \begin{bmatrix}
    \ip{\psi_0}{\phi_m} \\ \vdots \\ \ip{\psi_{n-1}}{\phi_m}
    \end{bmatrix},
\qquad\text{for } m \ge n.
\]
Then $\set{\vb_m}_{m \ge n}$ is complete in $\C^n,$ and hence spans $\C^n.$
\end{lemma}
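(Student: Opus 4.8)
The plan is to prove completeness in $\C^n$ by the standard duality argument: show that the only vector orthogonal to every $\vb_m$ is the zero vector, which forces $\set{\vb_m}_{m \ge n}$ to span all of $\C^n.$ Since $\C^n$ is finite dimensional, completeness and spanning are equivalent, so this suffices.

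Concretely, I would suppose $\wb = (w_0, \ldots, w_{n-1})^T \in \C^n$ satisfies $\ip{\vb_m}{\wb} = 0$ for every $m \ge n,$ where $\ip{\cdot}{\cdot}$ denotes the standard inner product on $\C^n.$ Unwinding the definition of $\vb_m,$ this says
\[
\sum_{j=0}^{n-1} \overline{w_j} \, \ip{\psi_j}{\phi_m} \Eq 0,
\qquad\text{for every } m \ge n.
\]
The key move is to pull the scalars inside the Hilbert space inner product, using linearity in the first slot, to rewrite this as $\ip{h}{\phi_m} = 0,$ where $h = \sum_{j=0}^{n-1} \overline{w_j}\, \psi_j \in \Hilbert.$

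Now I would invoke the two hypotheses in turn. First, since $\set{\phi_m}_{m \ge n}$ is complete in $\Hilbert$ and $h$ is orthogonal to every element of this family, completeness forces $h = 0.$ Second, since $\psi_0, \ldots, \psi_{n-1}$ are linearly independent, the relation $\sum_{j=0}^{n-1} \overline{w_j}\, \psi_j = 0$ forces $\overline{w_j} = 0$ for every $j,$ hence $\wb = 0.$ This shows the orthogonal complement of $\set{\vb_m}_{m \ge n}$ in $\C^n$ is trivial, so the family spans $\C^n.$

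This argument is essentially mechanical, so I do not anticipate a serious obstacle; the only point requiring care is bookkeeping with the complex conjugates, namely tracking how the conjugates $\overline{w_j}$ arising from the $\C^n$ inner product interact with the linearity conventions on the $\Hilbert$ inner product when passing from the coordinate sum to $\ip{h}{\phi_m}.$ Getting the conjugation right is what makes the linear-independence conclusion $\overline{w_j} = 0$ (equivalently $w_j = 0$) come out cleanly.
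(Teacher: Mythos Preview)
Your proposal is correct and matches the paper's proof essentially line for line: the paper likewise assumes a vector $\ab \in \C^n$ with $\vb_m \cdot \ab = 0$ for all $m \ge n,$ rewrites this as $\bigip{\sum_j \overline{a_j}\,\psi_j}{\phi_m} = 0,$ and then applies completeness followed by linear independence to conclude $\ab = 0.$
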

\begin{proof}
Then for each $m \ge n$ we have that
\[
\biggip{\sum_{j=0}^{n-1} \overline{a_j}\tinyspace \psi_j}{\phi_m}
\Eq \sum_{j=0}^{n-1}  \, \overline{a_j}\,\ip{\psi_j}{\phi_m}
\Eq \vb_m \cdot \ab 
\Eq 0.
\]
Since $\set{\phi_m}_{m \geq n}$ is complete, we therefore have
$\sum_{j=0}^{n-1} \overline{a_j}\tinyspace \psi_j = 0.$
But $\{\psi_0, \ldots, \psi_{n-1}$\} is linearly independent,
so $a_j = 0$ for $j=0,\ldots, n-1,$ and thus $\ab = 0.$
Since $\C^n$ is finite-dimensional, it follows that the
finite linear span of $\set{v_m}_{m \ge n}$ is $\C^n.$
\end{proof}

\subsection{Generalization of Theorem \ref{main theorem}}

Now we consider sequences that are overcomplete by $n > 1$ elements.

\begin{theorem} \label{overcomplete by n}
Assume that $\Phi = \set{\phi_k}_{k \geq 0}$
satisfies the following properties.
\begin{enumerate}
\item[\textup{(a)}]
$\Phi' = \set{\phi_k}_{k \geq n}$ is exact in $\Hilbert.$

\smallskip
\item[\textup{(b)}]
$\Phi$ has a reproducing partner $\Psi = \set{\psi_k}_{k \ge 0}.$
\end{enumerate}
Then $\Phi'$ is a Schauder basis for $\Hilbert.$
\end{theorem}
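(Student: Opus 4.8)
The plan is to mimic the structure of the proof of Theorem \ref{main theorem}, but to handle the $n$ overcomplete elements $\phi_0,\ldots,\phi_{n-1}$ and $\psi_0,\ldots,\psi_{n-1}$ simultaneously rather than the single element $\phi_0,\psi_0$. First I would invoke Lemma \ref{reduction lemma} to reduce to the case where both $\set{\psi_0,\ldots,\psi_{n-1}}$ and $\set{\phi_0,\ldots,\phi_{n-1}}$ are linearly independent: if either set is dependent, Lemma \ref{reduction lemma} rewrites the reproducing identity \eqref{reproduceidentity_eq} with one fewer overcomplete term and with the new overcompleting vectors lying in the span of the old ones, so by induction on $n$ we may assume linear independence. (One must check that the tail $\set{\phi_k}_{k\ge n}$ remains exact and that the reproducing property persists, which it does since \eqref{reduction_eq} leaves the operator $S_{\Psi,\Phi}$ unchanged weakly.)

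With independence in hand, let $\tPhi=\set{\tphi_k}_{k\ge n}$ be the biorthogonal sequence to the exact system $\Phi'$. Applying the reproducing identity to $f$ against $\tphi_j$ for $j\ge n$ and using biorthogonality, the tail $\sum_{k\ge n}\ip{f}{\psi_k}\ip{\phi_k}{\tphi_j}$ collapses to $\ip{f}{\psi_j}$, giving the analogue of \eqref{tphi_eq}, namely
\begin{equation*}
\tphi_j \Eq \sum_{i=0}^{n-1}\ip{\tphi_j}{\phi_i}\,\psi_i \plus \psi_j,
\qquad\text{for every } j\ge n.
\end{equation*}
The goal, as before, is to show that each overcompleting $\phi_a$ (for $a=0,\ldots,n-1$) lies weakly in $\overline{\Span}\set{\phi_k}_{k\ge n}$ via the coefficients $\ip{\phi_a}{\tphi_k}$, i.e.\ to establish the analogue of \eqref{phi0_eq}. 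Feeding the displayed expression for $\tphi_j$ back into the reproducing identity evaluated at a suitable $\phi_m$ (with $m\ge n$), one obtains a system of $n$ linear relations among the quantities $\bigparen{\phi_a - \sum_{k\ge n}\ip{\phi_a}{\tphi_k}\phi_k}$, with coefficient matrix given by the Gram-type data $\ip{\phi_m}{\psi_i}$. Here is where Lemma \ref{linear independence completeness} does the essential work: since $\set{\phi_m}_{m\ge n}$ is complete and $\set{\psi_0,\ldots,\psi_{n-1}}$ is independent, the vectors $\vb_m=(\ip{\psi_i}{\phi_m})_{i=0}^{n-1}$ span $\C^n$, so we can select indices $m_1,\ldots,m_n\ge n$ making the relevant $n\times n$ coefficient matrix invertible. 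Inverting it forces each residual $\phi_a-\sum_{k\ge n}\ip{\phi_a}{\tphi_k}\phi_k$ to vanish weakly, yielding the desired weak expansion $\phi_a=\sum_{k\ge n}\ip{\phi_a}{\tphi_k}\phi_k$ for every $a=0,\ldots,n-1$.

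The main obstacle I anticipate is exactly this invertibility/cancellation step: in Theorem \ref{main theorem} the single nonvanishing scalar $\ip{\phi_n}{\psi_0}$ could simply be divided out, whereas here one must produce an invertible $n\times n$ matrix, and the vectors $\vb_m$ spanning $\C^n$ (Lemma \ref{linear independence completeness}) is precisely the tool that guarantees such a choice of indices exists. Some care is also needed to justify interchanging the finite sums over $i$ with the conditionally convergent infinite series over $k$, but the overcompleting sums are finite so this is a routine rearrangement of finitely many convergent series. Once the weak expansions for all $\phi_a$ are established, the final step is identical to the end of Theorem \ref{main theorem}: substituting these expansions into the reproducing identity for general $g,h\in\Hilbert$ shows that $(\tPhi,\Phi')$ is itself a reproducing pair, and since $\Phi'$ is exact, Lemma \ref{exact factorization implies schauder basis} concludes that $\Phi'$ is a Schauder basis for $\Hilbert$.
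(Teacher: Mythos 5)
Your proposal is correct and follows essentially the same route as the paper's proof: the same reduction via Lemma \ref{reduction lemma}, the same identity $\tphi_j = \psi_j + \sum_{k=0}^{n-1}\ip{\tphi_j}{\phi_k}\,\psi_k,$ and the same use of Lemma \ref{linear independence completeness} to extract the weak expansions of $\phi_0,\dots,\phi_{n-1}$ before concluding with Lemma \ref{exact factorization implies schauder basis}. Your ``select $n$ indices making the matrix invertible and invert'' step is just the concrete form of the paper's ``weak convergence implies strong convergence in $\C^n$'' argument, so the two are the same proof.
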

\begin{proof}
By repeatedly applying Lemma \ref{reduction lemma} if necessary,
we can assume that $\set{\phi_0,\ldots, \phi_{n-1}}$
and $\set{\psi_0, \ldots, \psi_{n-1}}$ are each linearly independent.

If $j \geq n,$ then we have for all $f \in \Hilbert$ that
\begin{alignat*}{2}
\ip{f}{\tphi_j}
& \Eq \sum_{k=n}^{\infty} \, \ip{f}{\psi_k} \, \ip{\phi_k}{\tphi_j} \plus
      \sum_{k=0}^{n-1} \, \ip{f}{\psi_k} \, \ip{\phi_k}{\tphi_j}
      && \qquad\text{\footnotesize (reproducing property)}
      \\
& \Eq \ip{f}{\psi_j} \Plus
      \sum_{k=0}^{n-1} \, \ip{f}{\psi_k} \, \ip{\phi_k}{\tphi_j}
      && \qquad\text{\footnotesize (biorthogonality)}
      \\
& \Eq \Bigip{f}{\psi_j \Plus \sum_{k=0}^{n-1} \,
      \ip{\tphi_j}{\phi_k} \, \psi_k}.
\end{alignat*}
Therefore
\begin{equation} \label{tphij_eq}
\tphi_j \Eq \psi_j \Plus \sum_{k=0}^{n-1} \, \ip{\tphi_j}{\phi_k} \, \psi_k,
\qquad\text{for every } j \ge n.
\end{equation}
Hence, if $m \geq n$ and $g \in \Hilbert,$ then
\begin{alignat}{2}
\ip{\phi_m}{g}
& \Eq \sum_{k=n}^{\infty} \, \ip{\phi_m}{\psi_k} \, \ip{\phi_k}{g} \plus
      \sum_{k=0}^{n-1} \, \ip{\phi_m}{\psi_k} \, \ip{\phi_k}{g}
      \quad\text{\footnotesize (reproducing property)}
      \notag \\
& \Eq \sum_{k=n}^{\infty} \, \Bigip{\phi_m}{\tphi_k -
      \sum_{j=0}^{n-1} \, \ip{\tphi_k}{\phi_j} \, \psi_j} \ip{\phi_k}{g} \plus
      \sum_{k=0}^{n-1} \, \ip{\phi_m}{\psi_k} \, \ip{\phi_k}{g}
      \quad\text{\footnotesize (by equation \eqref{tphij_eq})}
      \notag \\
& \Eq \ip{\phi_m}{g} \Minus \sum_{k=n}^{\infty} \sum_{j=0}^{n-1} \,
      \ip{\phi_m}{\psi_j} \, \ip{\phi_j}{\tphi_k} \, \ip{\phi_k}{g} \plus
      \sum_{k=0}^{n-1} \, \ip{\phi_m}{\psi_k} \, \ip{\phi_k}{g}.
      \label{calculation_eq}
\end{alignat}
Consequently, if we let $\vb_m$ be as in
Lemma \ref{linear independence completeness} and let
\[
\ub
\Eq \begin{bmatrix}
    \ip{\phi_0}{g} \\ \vdots \\ \ip{\phi_{n-1}}{g}
    \end{bmatrix}
\qquad\text{and}\qquad
\wb_k
\Eq \begin{bmatrix}
    \ip{\phi_0}{\tphi_k} \, \ip{\phi_k}{g} \\ \vdots \\
    \ip{\phi_{n-1}}{\tphi_k} \, \ip{\phi_k}{g}
    \end{bmatrix},
\]
then we have for all $m \ge n$ that
\begin{align*}
\ub \cdot \vb_m
\Eq \sum_{k=0}^{n-1} \, \ip{\phi_k}{g} \, \overline{\ip{\psi_k}{\phi_m}}
& \Eq \sum_{k=0}^{n-1} \, \ip{\phi_m}{\psi_k} \, \ip{\phi_k}{g}
      \\
& \Eq \sum_{k=n}^{\infty} \sum_{j=0}^{n-1} \,
      \ip{\phi_m}{\psi_j} \, \ip{\phi_j}{\tphi_k} \, \ip{\phi_k}{g}
      \qquad\text{\footnotesize (by equation \eqref{calculation_eq})}
      \\
& \Eq \lim_{N \to \infty} \sum_{k=n}^N \, (\wb_k \cdot \vb_m)
\Eq \lim_{N \to \infty} \biggparen{\sum_{k=n}^N \wb_k \tinyback} \cdot \vb_m
\end{align*}
But $\set{\vb_m}_{m \ge n}$ spans $\C^n$ by 
Lemma \ref{linear independence completeness},
so this implies that $\sum_{k=n}^N \wb_k$ converges weakly to $\ub$
in $\C^n$ as $N \to \infty.$
Since weak convergence implies strong convergence in finite-dimensional
normed spaces, it follows that
$\ub \Eq \sum_{k=n}^\infty \wb_k,$
with convergence in the norm of $\C^n$. We conclude that $\ub \Eq \sum_{k=n}^\infty \wb_k,$ and therefore
\begin{equation} \label{phikg_eq}
\ip{\phi_k}{g}
\Eq \sum_{j=n}^\infty \, \ip{\phi_k}{\tphi_j} \, \ip{\phi_j}{g},
\qquad\text{for } k = 0, \dots, n-1.
\end{equation}

Finally, in order to show that $\set{\phi_k}_{k \geq n}$ is a Schauder basis,
fix any vectors $f$ and $g$ in $\Hilbert.$
Then,
\begin{alignat*}{2}
\ip{f}{g}
& \Eq \sum_{k=0}^{n-1} \, \ip{f}{\psi_k} \, \ip{\phi_k}{g} \plus
      \sum_{k=n}^{\infty} \, \ip{f}{\psi_k} \, \ip{\phi_k}{g}
      && \qquad\text{\footnotesize (reproducing property)}
      \\
& \Eq \sum_{k=0}^{n-1} \, \ip{f}{\psi_k} \,
      \sum_{j=n}^\infty \, \ip{\phi_k}{\tphi_j} \, \ip{\phi_j}{g} \plus
      \sum_{k=n}^{\infty} \, \ip{f}{\psi_k} \, \ip{\phi_k}{g}
      && \qquad\text{\footnotesize (by equation \eqref{phikg_eq})}
      \allowdisplaybreaks \\
& \Eq \sum_{j=n}^\infty \, \Bigip{f}{\sum_{k=0}^{n-1} \,
      \ip{\tphi_j}{\phi_k} \, \psi_k} \ip{\phi_j}{g} \plus
      \sum_{k=n}^{\infty} \, \ip{f}{\psi_k} \, \ip{\phi_k}{g}
      \allowdisplaybreaks \\
& \Eq \sum_{j=n}^\infty \, \ip{f}{\tphi_j - \psi_j} \, \ip{\phi_j}{g} \plus
      \sum_{k=n}^{\infty} \, \ip{f}{\psi_k} \, \ip{\phi_k}{g}
      && \qquad\text{\footnotesize (by equation \eqref{tphij_eq})}
      \\
& \Eq \sum_{k=n}^\infty \, \ip{f}{\tphi_k} \, \ip{\phi_k}{g}.
\end{alignat*}
Lemma \ref{exact factorization implies schauder basis}
therefore implies that $\Phi'$ is a Schauder basis for $\Hilbert.$
\end{proof}

\printbibliography

\end{document}